\documentclass[]{interact}

\usepackage{epstopdf}% To incorporate .eps illustrations using PDFLaTeX, etc.
\usepackage[caption=false]{subfig}% Support for small, `sub' figures and tables

\usepackage[numbers,sort&compress]{natbib}% Citation support using natbib.sty
\bibpunct[, ]{[}{]}{,}{n}{,}{,}% Citation support using natbib.sty
\renewcommand\bibfont{\fontsize{10}{12}\selectfont}% Bibliography support using natbib.sty

\usepackage{mathtools}
\usepackage{bm}
\usepackage{placeins}
\usepackage{algorithm}
\usepackage{algpseudocode}
\usepackage[hidelinks]{hyperref}
\newcommand{\R}{\mathbb{R}}

\newcommand{\ip}[2]{\left\langle #1,#2\right\rangle}
\newcommand{\norm}[1]{\left\lVert #1\right\rVert}
\DeclareMathOperator{\prox}{prox}

\DeclareMathOperator{\argmin}{argmin}

\theoremstyle{plain}
\newtheorem{theorem}{Theorem}
\newtheorem{lemma}{Lemma}
\newtheorem{proposition}{Proposition}

\theoremstyle{definition}
\newtheorem{assumption}{Assumption}

\theoremstyle{remark}
\newtheorem{remark}{Remark}

\begin{document}

\articletype{RESEARCH ARTICLE}

\title{Prox-NAG-GS: A Semi-Implicit Proximal Method for Composite Optimization}

\author{
\name{Sikeh Gisele Wiykiynyuy\textsuperscript{a}, Kelvin Asu Ekuri\textsuperscript{a} and Valentin Leplat\textsuperscript{a}}
\affil{\textsuperscript{a}Institute of Data Science and Artificial Intelligence, Innopolis University}
}

\maketitle

\begin{abstract}
Composite optimization problems, where a smooth loss is combined with a nonsmooth regularizer,
are common in machine learning and inverse problems. In this work, we study a proximal extension
of NAG-GS, a semi-implicit accelerated method obtained from a Gauss-Seidel discretization of an
inertial dynamics. The proposed method, called Prox-NAG-GS, keeps the coupled structure of
NAG-GS for the smooth part and replaces the second update by a proximal step. It therefore
applies to objectives of the form \(F=f+r\), where \(f\) is smooth and \(r\) is convex and
proximable. We derive deterministic convergence guarantees for this method. The analysis has to account for
a specific feature of the scheme. Prox-NAG-GS keeps two coupled sequences: an \(x\)-sequence,
on which the gradient of the smooth term is evaluated, and a \(v\)-sequence, produced by the
proximal update. The gradient is evaluated at \(x_{k+1}\), whereas the proximal step returns
\(v_{k+1}\), which creates a mismatch absent from the standard proximal-gradient analysis. Under
the sufficient condition that the proximal quadratic parameter is at least as large as the smoothness
constant of \(f\), we control this mismatch through an augmented Lyapunov function involving
both sequences. This gives a linear convergence result in the strongly convex composite case. In
the convex case, the same Lyapunov structure yields an \(O(1/k)\) rate for the best iterate and
for the averaged iterate. We test the method on deterministic Elastic Net and Group Lasso problems, and on stochastic
sparse softmax-regression benchmarks. In the deterministic tests, Prox-NAG-GS reaches the same
solutions as the baselines with substantially fewer iterations; for Group Lasso this also gives the
best wall-clock time. In the stochastic tests, Prox-NAG-GS compares favorably with Prox-SGD in
terms of data-fit reduction and gives similar test accuracies. The results also reveal a different
regularization behavior: Prox-SGD usually produces sparser models and may give a lower full
regularized objective when the nonsmooth regularization is large.
\end{abstract}

\begin{keywords}
Composite optimization; proximal algorithms; semi-implicit methods; Lyapunov analysis; nonsmooth convex optimization; stochastic composite optimization
\end{keywords}

\section{Introduction}

Many optimization problems in machine learning, signal processing and inverse problems combine
a smooth loss with a nonsmooth regularizer or constraint. A typical form is
\begin{equation}
\label{eq:intro-composite}
    \min_{x\in\R^d} F(x) := f(x)+r(x),
\end{equation}
where \(f\) is differentiable and \(r\) is convex, possibly nonsmooth, but has an efficient proximal
operator. This setting appears for instance in sparse regression, sparse logistic or softmax
regression, Group Lasso, and constrained learning problems. In these examples, differentiating
the regularizer is either impossible or not desirable. The usual approach is therefore to treat the
smooth term by a gradient step and the nonsmooth term by a proximal step; see, e.g.,
\cite{CombettesPesquet2011,ParikhBoyd2014}.

Proximal-gradient methods and their accelerated variants are among the standard algorithms for
such problems. They are simple, robust, and well understood; classical examples include ISTA and
FISTA~\cite{BeckTeboulle2009}, building on the acceleration ideas of Nesterov~\cite{Nesterov1983}.
At the same time, accelerated and inertial methods can be sensitive to the way the underlying
dynamics is discretized. This is one of the motivations behind NAG-GS~\cite{LeplatNAGGS2022},
a semi-implicit method obtained from a Gauss-Seidel discretization of an accelerated
continuous-time model. The method keeps two coupled variables and evaluates the gradient at
the new \(x\)-iterate. This gives a different stability behavior compared with fully explicit
momentum schemes.

The original NAG-GS method was designed for smooth objectives, or for settings where one can
use a gradient-like oracle for the whole objective. In composite optimization, this is not the natural
situation. Applying a gradient-type update directly to \(F=f+r\) would ignore the structure of the
nonsmooth term. Instead, the regularizer should be handled through its proximal operator. This
motivates the question studied in this paper: can one combine the semi-implicit structure of
NAG-GS with a proximal step for the nonsmooth part?

The construction is direct. The second update of NAG-GS can be interpreted as the minimizer of
a quadratic model. Adding the nonsmooth term \(r\) to this quadratic model leads to a proximal
update. The resulting method, which we call Prox-NAG-GS, reduces to NAG-GS when \(r=0\),
and can be implemented with the same proximal maps used in classical splitting methods.

Our contributions are as follows.
\begin{enumerate}
    \item[(i)] We introduce Prox-NAG-GS, a proximal extension of NAG-GS~\cite{LeplatNAGGS2022}
    for composite objectives of the form~\eqref{eq:intro-composite}.

    \item[(ii)] We prove deterministic convergence results for the proposed method. In the strongly
    convex composite case, we obtain a linear convergence rate under the conservative condition
    that the proximal quadratic parameter is at least as large as the smoothness constant of \(f\).
    The proof relies on an augmented Lyapunov function involving both \(v_k\) and \(x_k\).
    We also show that the same Lyapunov structure gives an \(O(1/k)\) guarantee for the best
    iterate and for the averaged iterate in the convex case.

    \item[(iii)] We evaluate the method on deterministic Elastic Net and Group Lasso benchmarks, and
    compare it with ISTA, FISTA, and Chambolle-Pock.

    \item[(iv)] We also test stochastic sparse softmax regression with entrywise \(\ell_1\) and Group
    Lasso penalties. These experiments show that Prox-NAG-GS compares favorably with Prox-SGD in terms of
data-fit reduction and gives similar test accuracies, while Prox-SGD usually produces sparser
solutions.
\end{enumerate}

The remainder of the paper is organized as follows. 
Section~\ref{sec:related-work} reviews the most relevant proximal, inertial and accelerated methods for composite optimization. 
Section~\ref{sec:problem-algorithm} introduces the composite problem and derives Prox-NAG-GS from the semi-implicit structure of NAG-GS. 
Section~\ref{sec:theory} presents the deterministic convergence analysis. We first establish the key one-step inequalities, then prove a linear convergence result in the strongly convex case, and finally derive an \(O(1/k)\) guarantee in the convex case. 
Section~\ref{sec:numerics} reports numerical experiments on deterministic and stochastic composite optimization problems, with comparisons to standard proximal methods. Finally, Section~\ref{sec:conclusion} concludes and outlines future directions.

\subsection{Related work}
\label{sec:related-work}

\paragraph{Proximal-gradient and splitting methods.}
Problem~\eqref{eq:intro-composite} is the standard setting of proximal-gradient, or
forward-backward, methods. These methods treat the smooth term \(f\) explicitly by a gradient
step and the nonsmooth term \(r\) implicitly through its proximal operator. In the case of a
least-squares loss with an \(\ell_1\) penalty, this gives ISTA~\cite{DaubechiesDefriseDeMol2004}.
More generally, forward-backward splitting is a basic tool for convex composite optimization;
see, e.g., \cite{CombettesWajs2005,CombettesPesquet2011,ParikhBoyd2014,BauschkeCombettes2017}.
Accelerated variants, in particular FISTA~\cite{BeckTeboulle2009}, are standard baselines for
deterministic composite optimization. Related accelerated proximal-gradient schemes were also
studied by Tseng~\cite{Tseng2008} and Nesterov~\cite{Nesterov2013Composite}.

Other splitting methods are also relevant. Primal-dual methods, such as Chambolle-Pock
\cite{ChambollePock2011} and Condat-V\~u type algorithms~\cite{Condat2013,Vu2013}, are
especially useful when the nonsmooth term is composed with a linear operator or when constraints
are better handled through a saddle-point formulation. In our numerical experiments, we include
Chambolle-Pock as one of the deterministic baselines.

\paragraph{Stochastic composite optimization.}
In large-scale learning, the smooth term often has the finite-sum form
\[
    f(x)=\frac1n\sum_{i=1}^n f_i(x),
\]
and gradients are computed from mini-batches. This has motivated stochastic proximal and
stochastic composite-gradient methods. Early examples include FOBOS~\cite{DuchiSinger2009}
and regularized dual averaging~\cite{Xiao2010}. Stochastic approximation methods for composite
objectives were further developed in, for example, \cite{Lan2012,GhadimiLan2013,GhadimiLanZhang2016};
see also the survey~\cite{BottouCurtisNocedal2018} for a broader discussion of stochastic
optimization.

For finite-sum problems, variance-reduced methods such as SVRG, Prox-SVRG, SAG, and SAGA
are also important references~\cite{JohnsonZhang2013,XiaoZhang2014,SchmidtLeRouxBach2017,DefazioBachLacosteJulien2014}.
They are not the main focus of this paper. In the stochastic experiments, we use Prox-SGD as a
first baseline, because it is the direct stochastic counterpart of proximal-gradient descent and uses
the proximal operator at every stochastic step.

\paragraph{Acceleration, inertial dynamics, and NAG-GS.}
Acceleration can be studied from the discrete viewpoint, as in Nesterov's accelerated gradient
method~\cite{Nesterov1983,Nesterov2018}, or through continuous-time inertial dynamics. The
continuous-time point of view has led to a better understanding of accelerated methods and their
Lyapunov functions; see, e.g., \cite{SuBoydCandes2016,AttouchChbaniRiahi2019,WilsonRechtJordan2021}.
There is also a large literature on inertial proximal and inertial forward-backward methods, going
back to the heavy-ball method~\cite{Polyak1964} and including splitting schemes such as
\cite{AlvarezAttouch2001,MoudafiOliny2003,OchsChenBroxPock2014}. %AttouchMaingeRedont2014 ?

NAG-GS~\cite{LeplatNAGGS2022} belongs to this line of work, but uses a semi-implicit
Gauss-Seidel discretization of an accelerated dynamics. The present paper studies a proximal
extension of this idea. The resulting method is close in spirit to forward-backward splitting, but
the NAG-GS coupling changes the analysis: the gradient is evaluated at \(x_{k+1}\), while the
proximal step produces \(v_{k+1}\). This mismatch is the main point handled in the convergence
proof.

\section{Composite problem and Prox-NAG-GS}
\label{sec:problem-algorithm}

\subsection{Problem class}

We consider the composite optimization problem
\begin{equation}
\label{eq:composite-problem}
    \min_{x\in\R^d} F(x) := f(x)+r(x),
\end{equation}
where \(f:\R^d\to\R\) is differentiable, and
\(r:\R^d\to\R\cup\{+\infty\}\) is proper, closed and convex. We assume that
\(r\) is proximable, in the sense that, for every \(\lambda>0\), the proximal
operator
\begin{equation}
\label{eq:prox-def}
    \prox_{\lambda r}(z)
    :=
    \argmin_{u\in\R^d}
    \left\{
        r(u)+\frac{1}{2\lambda}\norm{u-z}^2
    \right\}
\end{equation}
can be computed efficiently. This framework also covers simple convex
constraints, by taking \(r\) to be the indicator function of a closed convex
set, in which case the proximal operator is the Euclidean projection.

Two examples will be used in the numerical section. For the \(\ell_1\) penalty
\(r(x)=\lambda_1\norm{x}_1\), the proximal operator is the soft-thresholding
map. For the Group Lasso penalty
\[
    r(x)=\lambda_g\sum_{G\in\mathcal G}\norm{x_G}_2,
\]
the proximal operator is block separable and, on each group, is given by
\begin{equation}
\label{eq:group-prox}
    \prox_{\tau\lambda_g\norm{\cdot}_2}(x_G)
    =
    \left(1-\frac{\tau\lambda_g}{\norm{x_G}_2}\right)_+ x_G,
\end{equation}
with the usual convention that the right-hand side is zero when \(x_G=0\).

For the deterministic analysis, we will use the following assumptions. The
first one corresponds to the convex case, while the second one will be added
for the linear convergence result.

\begin{assumption}[Convex composite setting]
\label{ass:deterministic-convex}
The function \(f\) is convex, differentiable and \(L\)-smooth. The function
\(r\) is proper, closed and convex. The composite function \(F=f+r\) admits a
minimizer \(x^\star\).
\end{assumption}

\begin{assumption}[Strongly convex composite setting]
\label{ass:deterministic-strongly-convex}
Assumption~\ref{ass:deterministic-convex} holds. Moreover, \(f\) is
\(\mu_f\)-strongly convex, and \(F=f+r\) is \(\mu_F\)-strongly convex.
\end{assumption}

Since \(r\) is convex, one can take \(\mu_F=\mu_f\) when \(f\) is
\(\mu_f\)-strongly convex. We keep the two constants in the notation because
the proof only uses strong convexity of \(f\) in one place, and strong
convexity of \(F\) in another.

We also distinguish these curvature constants from the algorithmic parameter
\(\widehat\mu\) used in Prox-NAG-GS. This distinction is important.
The parameter \(\widehat\mu\) controls the quadratic term in the proximal
subproblem. In the convergence analysis, we will impose the sufficient condition
\(\widehat\mu\ge L\). This condition is conservative, but it gives a regime in
which the proximal quadratic model is strong enough to control the mismatch
created by the semi-implicit update. In the numerical experiments,
\(\widehat\mu\) is treated as a tunable parameter.

\subsection{From NAG-GS to a proximal update}

The smooth NAG-GS method uses two variables, \(x_k\) and \(v_k\). Given a step
parameter \(\alpha_k>0\), we define
\[
    a_k=\frac{\alpha_k}{1+\alpha_k}.
\]
The first update is
\begin{equation}
\label{eq:x-update}
    x_{k+1}=(1-a_k)x_k+a_kv_k.
\end{equation}
The second update can be written as a gradient step from an intermediate point.
Let
\[
    z_{k+1}:=(1-b_k)v_k+b_kx_{k+1},
\]
where \(b_k\in(0,1)\) is defined below. In the smooth case, the update
\[
    v_{k+1}
    =
    z_{k+1}-\frac{b_k}{\widehat\mu}g_{k+1}
\]
is equivalently the minimizer of the quadratic model
\[
    v
    \mapsto
    \ip{g_{k+1}}{v}
    +
    \frac{\widehat\mu}{2b_k}\norm{v-z_{k+1}}^2.
\]
Here \(g_{k+1}\) is a gradient, or a gradient estimator, of the smooth part at
\(x_{k+1}\).

This form makes the proximal extension immediate. We add the nonsmooth term
\(r(v)\) to the quadratic model. This gives
\begin{equation}
\label{eq:prox-naggs-vupdate}
    v_{k+1}
    =
    \prox_{\frac{b_k}{\widehat\mu}r}
    \left(
        z_{k+1}-\frac{b_k}{\widehat\mu}g_{k+1}
    \right).
\end{equation}
When \(r=0\), this reduces to the smooth NAG-GS update.

\subsection{The algorithm}

The resulting method is given in Algorithm~\ref{alg:prox-naggs}. In the
deterministic case, we use \(g_{k+1}=\nabla f(x_{k+1})\). In the stochastic
case, \(g_{k+1}\) is computed from a mini-batch. The convergence analysis in
Section~\ref{sec:theory} is deterministic; the stochastic variant is studied
numerically in Section~\ref{sec:numerics}.

\begin{algorithm}[t]
\caption{Prox-NAG-GS for composite optimization}
\label{alg:prox-naggs}
\begin{algorithmic}[1]
\Require \(x_0\in\R^d\), \(v_0=x_0\), parameters \(\widehat\mu>0\),
\(\gamma_0>0\), step parameters \(\alpha_k>0\), proximal operator
\(\prox_{\lambda r}\)
\For{\(k=0,1,2,\ldots\)}
    \State \(a_k \gets \alpha_k/(1+\alpha_k)\)
    \State \(x_{k+1}\gets (1-a_k)x_k+a_kv_k\)
    \State \(b_k \gets \alpha_k\widehat\mu/(\alpha_k\widehat\mu+\gamma_k)\)
    \State \(z_{k+1}\gets (1-b_k)v_k+b_kx_{k+1}\)
    \State compute \(g_{k+1}\), a gradient or stochastic gradient of \(f\) at \(x_{k+1}\)
    \State
    \[
        v_{k+1}
        \gets
        \prox_{\frac{b_k}{\widehat\mu}r}
        \left(
            z_{k+1}-\frac{b_k}{\widehat\mu}g_{k+1}
        \right)
    \]
    \State \(\gamma_{k+1}\gets (1-a_k)\gamma_k+a_k\widehat\mu\)
\EndFor
\end{algorithmic}
\end{algorithm}

The method is close in spirit to forward-backward splitting, since the smooth
part is treated by a gradient step and the nonsmooth part by a proximal step.
The difference is that the gradient is evaluated at the new point \(x_{k+1}\),
which is coupled with \(v_k\), while the proximal step produces \(v_{k+1}\).
This is the semi-implicit structure inherited from NAG-GS. It is also the main
point that has to be handled in the convergence analysis.

\section{Convergence analysis}
\label{sec:theory}

In this section, we study Prox-NAG-GS in the deterministic composite setting. The analysis has
two parts. We first derive estimates that are valid under the convex composite setting of
Assumption~\ref{ass:deterministic-convex}. We then use these estimates in two different ways.
Under the strongly convex setting of Assumption~\ref{ass:deterministic-strongly-convex}, we
prove a linear convergence result. In the merely convex case, we prove an \(O(1/k)\) guarantee for
the best iterate and for the averaged iterate.

The analysis is not a direct application of the standard proximal-gradient proof. The difficulty
comes from the point at which the gradient is evaluated. The proximal step returns \(v_{k+1}\),
while the gradient of the smooth part is evaluated at \(x_{k+1}\). Hence the optimality condition
of the proximal subproblem does not directly give an element of \(\partial F(v_{k+1})\). This
produces a mismatch term that has to be controlled.

The proof is organized as follows. We first fix a constant-parameter regime and derive the
proximal identities, the geometric identities, and the main one-step inequality. These estimates
are common to both convergence results. The mismatch term is then absorbed under the sufficient
condition \(\widehat\mu\ge L\). For the strongly convex case, a natural Lyapunov candidate does
not close, and one has to add a term involving \(\|x_k-x^\star\|^2\). This gives a linear
convergence result. Finally, we show that the same augmented energy also gives a descent
inequality in the convex case.

Throughout this section, \(x^\star\) denotes a minimizer of \(F=f+r\), and
\[
    F^\star := F(x^\star).
\]
Unless stated otherwise, Assumption~\ref{ass:deterministic-convex} is in force. The additional
strong convexity assumptions are only used in the subsection devoted to the linear convergence
result.

\subsection{Constant-parameter regime}

We focus on the constant-damping regime
\[
    \gamma_0=\widehat\mu.
\]
Then the recursion for \(\gamma_k\) gives \(\gamma_k\equiv\widehat\mu\), and therefore
\[
    b_k=a_k=\frac{\alpha_k}{1+\alpha_k}.
\]
% In the linear-rate result, we further assume that \(a_k\) is constant:
% \[
%     a_k \equiv a\in(0,1).
% \]
In the convergence results below, we further assume that \(a_k\) is constant:
\[
    a_k\equiv a\in(0,1).
\]
The deterministic Prox-NAG-GS iteration then reads
\begin{equation}
\label{eq:constant-prox-naggs-theory}
\begin{aligned}
    x_{k+1}&=(1-a)x_k+av_k,\\
    z_{k+1}&=(1-a)v_k+ax_{k+1},\\
    v_{k+1}
    &=
    \prox_{\frac{a}{\widehat\mu}r}
    \left(z_{k+1}-\frac{a}{\widehat\mu}\nabla f(x_{k+1})\right).
\end{aligned}
\end{equation}
The parameter \(\widehat\mu\) is an algorithmic curvature parameter. It is not necessarily equal
to the strong convexity constant of \(f\). In the proof below, we will require \(\widehat\mu\ge L\).
This is a sufficient condition ensuring that the quadratic model used in the proximal step is strong
enough to compensate for the smoothness error of \(f\).

\subsection{Proximal identities}

We first collect two standard consequences of the proximal update.

\begin{lemma}[Optimality condition]
\label{lem:prox-opt-new}
Let \(v_{k+1}\) be generated by~\eqref{eq:constant-prox-naggs-theory}. Then \(v_{k+1}\) is the
unique minimizer of
\[
    v\mapsto
    r(v)
    +\ip{\nabla f(x_{k+1})}{v}
    +\frac{\widehat\mu}{2a}\norm{v-z_{k+1}}^2.
\]
Moreover, there exists \(s_{k+1}\in\partial r(v_{k+1})\) such that
\begin{equation}
\label{eq:prox-opt-new}
    0
    =
    s_{k+1}
    +\nabla f(x_{k+1})
    +\frac{\widehat\mu}{a}(v_{k+1}-z_{k+1}).
\end{equation}
Equivalently, if
\begin{equation}
\label{eq:q-def-new}
    q_{k+1}:=\frac{\widehat\mu}{a}(z_{k+1}-v_{k+1}),
\end{equation}
then
\begin{equation}
\label{eq:q-gradient-subgrad-new}
    q_{k+1}=\nabla f(x_{k+1})+s_{k+1}.
\end{equation}
\end{lemma}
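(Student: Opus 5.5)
The argument unfolds the definition of the proximal map and then applies the first-order optimality condition for a strongly convex composite function. By~\eqref{eq:prox-def} with \(\lambda=a/\widehat\mu\) and \(w_{k+1}:=z_{k+1}-\frac{a}{\widehat\mu}\nabla f(x_{k+1})\), the point \(v_{k+1}\) is the minimizer of
\[
    \Phi(v):=r(v)+\frac{\widehat\mu}{2a}\norm{v-w_{k+1}}^2 .
\]
Expanding the square gives
\[
    \frac{\widehat\mu}{2a}\norm{v-w_{k+1}}^2
    =
    \frac{\widehat\mu}{2a}\norm{v-z_{k+1}}^2+\ip{\nabla f(x_{k+1})}{v}+c,
\]
where \(c\) does not depend on \(v\). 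So \(\Phi\) and the function
\[
    \Psi(v):=r(v)+\ip{\nabla f(x_{k+1})}{v}+\frac{\widehat\mu}{2a}\norm{v-z_{k+1}}^2
\]
differ by a constant and have the same minimizers. This proves the first claim, once existence and uniqueness are settled.

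For existence and uniqueness, note that \(\Psi\) is the sum of a proper, closed, convex function \(r\) and a \(\frac{\widehat\mu}{a}\)-strongly convex, continuous quadratic. Hence \(\Psi\) is proper, closed and strongly convex. It is therefore coercive and attains its minimum, and by strict convexity the minimizer is unique. Thus the proximal map is single-valued and \(v_{k+1}\) is that unique minimizer.

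For the optimality condition, Fermat's rule gives \(0\in\partial\Psi(v_{k+1})\). The quadratic-plus-linear part is finite and differentiable on all of \(\R^d\), with gradient \(\nabla f(x_{k+1})+\frac{\widehat\mu}{a}(v-z_{k+1})\). The subdifferential sum rule therefore applies without any qualification condition (one summand is continuous everywhere), and
\[
    \partial\Psi(v)=\partial r(v)+\nabla f(x_{k+1})+\frac{\widehat\mu}{a}(v-z_{k+1}).
\]
Hence some \(s_{k+1}\in\partial r(v_{k+1})\) satisfies~\eqref{eq:prox-opt-new}. In particular \(v_{k+1}\in\operatorname{dom} r\), so \(\partial r(v_{k+1})\) is the relevant nonempty set.

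Rearranging~\eqref{eq:prox-opt-new} gives \(\frac{\widehat\mu}{a}(z_{k+1}-v_{k+1})=\nabla f(x_{k+1})+s_{k+1}\), which is~\eqref{eq:q-gradient-subgrad-new} with \(q_{k+1}\) defined as in~\eqref{eq:q-def-new}.

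The only point needing care is the sum rule for the subdifferential when \(r\) is extended-valued. It holds here because the smooth summand is finite everywhere; alternatively, one can show directly that \(\partial(r+h)=\partial r+\nabla h\) for a differentiable convex \(h\).
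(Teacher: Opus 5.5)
Your proof is correct and follows the same route as the paper. You unfold the prox definition, expand the square to discard the $v$-independent terms, use strong convexity for uniqueness, and apply the first-order optimality condition before rearranging; you also justify existence of the minimizer and the subdifferential sum rule for extended-valued $r$, both of which the paper leaves implicit.
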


\begin{proof}
By definition of the proximal operator, \(v_{k+1}\) minimizes
\[
    r(v)
    +
    \frac{\widehat\mu}{2a}
    \left\|
        v-\left(z_{k+1}-\frac{a}{\widehat\mu}\nabla f(x_{k+1})\right)
    \right\|^2.
\]
Expanding the square and removing the terms independent of \(v\) gives the stated minimization
problem. Since \(r\) is convex and the quadratic term is strongly convex, this minimizer is unique. The optimality condition
gives~\eqref{eq:prox-opt-new}, and~\eqref{eq:q-gradient-subgrad-new} follows by rearranging the
terms.
\end{proof}

\begin{lemma}[Three-point inequality]
\label{lem:three-point-new}
For every \(u\in\R^d\), the iterate \(v_{k+1}\) satisfies
\begin{align}
\label{eq:three-point-new}
    &r(v_{k+1})
    +\ip{\nabla f(x_{k+1})}{v_{k+1}}
    +\frac{\widehat\mu}{2a}\norm{v_{k+1}-z_{k+1}}^2
    \nonumber\\
    &\qquad \le
    r(u)
    +\ip{\nabla f(x_{k+1})}{u}
    +\frac{\widehat\mu}{2a}\norm{u-z_{k+1}}^2
    -\frac{\widehat\mu}{2a}\norm{u-v_{k+1}}^2 .
\end{align}
\end{lemma}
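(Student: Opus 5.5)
The plan is to use the strong convexity of the proximal subproblem from Lemma~\ref{lem:prox-opt-new}. Set
\[
    \Phi(v):=r(v)+\ip{\nabla f(x_{k+1})}{v}+\frac{\widehat\mu}{2a}\norm{v-z_{k+1}}^2 .
\]
It is the sum of the convex function \(r\), a linear function, and a quadratic with modulus \(\widehat\mu/a\). Hence \(\Phi\) is \((\widehat\mu/a)\)-strongly convex, and Lemma~\ref{lem:prox-opt-new} says that \(v_{k+1}\) is its minimizer. The inequality~\eqref{eq:three-point-new} is then exactly the quadratic growth estimate
\[
    \Phi(u)\ge\Phi(v_{k+1})+\frac{\widehat\mu}{2a}\norm{u-v_{k+1}}^2 ,
\]
and I will prove this estimate directly from the optimality condition~\eqref{eq:prox-opt-new}, without quoting a general strong-convexity lemma.

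\textbf{Subgradient inequality.} Take \(s_{k+1}\in\partial r(v_{k+1})\) as in~\eqref{eq:prox-opt-new}. For every \(u\) we have
\[
    r(u)\ge r(v_{k+1})+\ip{s_{k+1}}{u-v_{k+1}} .
\]
Substituting \(s_{k+1}=-\nabla f(x_{k+1})-\frac{\widehat\mu}{a}(v_{k+1}-z_{k+1})\) gives
\[
    r(u)+\ip{\nabla f(x_{k+1})}{u}\ge r(v_{k+1})+\ip{\nabla f(x_{k+1})}{v_{k+1}}+\frac{\widehat\mu}{a}\ip{z_{k+1}-v_{k+1}}{u-v_{k+1}} .
\]
If \(r(u)=+\infty\), the claimed inequality is trivial, so we may assume \(u\in\operatorname{dom} r\).

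\textbf{Polarization identity.} Next use the exact identity
\[
    2\ip{z_{k+1}-v_{k+1}}{u-v_{k+1}}=\norm{v_{k+1}-z_{k+1}}^2+\norm{u-v_{k+1}}^2-\norm{u-z_{k+1}}^2 ,
\]
which follows by expanding \(u-z_{k+1}=(u-v_{k+1})-(z_{k+1}-v_{k+1})\). Multiplying by \(\widehat\mu/(2a)\) and inserting it into the previous inequality, then moving the \(\norm{u-z_{k+1}}^2\) term to the right-hand side, yields~\eqref{eq:three-point-new}.

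There is no real obstacle here. The only points needing care are the sign bookkeeping in the polarization identity and the trivial case \(u\notin\operatorname{dom} r\).
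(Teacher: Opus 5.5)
Your proof is correct. The paper works from the same key fact: it notes that $\phi_k$ is $\widehat\mu/a$-strongly convex with minimizer $v_{k+1}$ and invokes the quadratic growth bound $\phi_k(u)\ge\phi_k(v_{k+1})+\frac{\widehat\mu}{2a}\norm{u-v_{k+1}}^2$ as a known property. You instead derive that bound by hand, from the subgradient $s_{k+1}$ of Lemma~\ref{lem:prox-opt-new} and the exact polarization identity.

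Your version is self-contained and shows that only plain convexity of $r$ is needed. The extra $\frac{\widehat\mu}{2a}\norm{u-v_{k+1}}^2$ comes purely from an algebraic identity for the quadratic. It does rely on the existence of $s_{k+1}$, that is, on the subdifferential sum rule behind Lemma~\ref{lem:prox-opt-new}, which holds here because the linear-plus-quadratic part is finite and differentiable everywhere. The growth bound the paper quotes can instead be obtained from the definition of strong convexity along the segment between $v_{k+1}$ and $u$, without any subgradient calculus.

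Your separate treatment of the case $r(u)=+\infty$ is harmless but unnecessary, since the subgradient inequality already holds trivially there.
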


\begin{proof}
The function minimized in Lemma~\ref{lem:prox-opt-new} is \(\widehat\mu/a\)-strongly convex and
is minimized at \(v_{k+1}\). Therefore, for every \(u\),
\[
    \phi_k(u)
    \ge
    \phi_k(v_{k+1})
    +\frac{\widehat\mu}{2a}\norm{u-v_{k+1}}^2,
\]
where
\[
    \phi_k(v)
    =
    r(v)
    +\ip{\nabla f(x_{k+1})}{v}
    +\frac{\widehat\mu}{2a}\norm{v-z_{k+1}}^2.
\]
Rearranging gives~\eqref{eq:three-point-new}.
\end{proof}

The next observation explains where the proof differs from the standard proximal-gradient
analysis. By Lemma~\ref{lem:prox-opt-new},
\[
    q_{k+1}
    =
    \nabla f(x_{k+1})+s_{k+1},
    \qquad s_{k+1}\in\partial r(v_{k+1}).
\]
If the gradient were evaluated at \(v_{k+1}\), then \(q_{k+1}\) would be an element of
\(\partial F(v_{k+1})\). Here, instead,
\[
    q_{k+1}
    =
    \underbrace{\nabla f(v_{k+1})+s_{k+1}}_{\in\partial F(v_{k+1})}
    +
    \underbrace{\nabla f(x_{k+1})-\nabla f(v_{k+1})}_{\text{mismatch}}.
\]
The \(L\)-smoothness of \(f\) gives
\[
    \norm{\nabla f(x_{k+1})-\nabla f(v_{k+1})}
    \le
    L\norm{x_{k+1}-v_{k+1}}.
\]
Thus the quantity \(\norm{x_{k+1}-v_{k+1}}^2\) will appear in the proof.

\subsection{Geometric identities}

We now record the identities coming only from the two affine updates in
\eqref{eq:constant-prox-naggs-theory}.

\begin{lemma}[Kinematic identities]
\label{lem:kinematic-new}
For every \(k\),
\begin{equation}
\label{eq:xplus-v-new}
    x_{k+1}-v_k=(1-a)(x_k-v_k),
\end{equation}
\begin{equation}
\label{eq:xplus-z-new}
    x_{k+1}-z_{k+1}=(1-a)^2(x_k-v_k),
\end{equation}
and
\begin{equation}
\label{eq:mismatch-decomp-new}
    x_{k+1}-v_{k+1}
    =
    (1-a)^2(x_k-v_k)+(z_{k+1}-v_{k+1}).
\end{equation}
\end{lemma}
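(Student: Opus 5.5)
The three identities follow directly from the two affine updates in~\eqref{eq:constant-prox-naggs-theory}, so the plan is a short chain of substitutions. No proximal information is needed; the proximal step enters only through the quantity \(z_{k+1}-v_{k+1}\), which is left untouched in~\eqref{eq:mismatch-decomp-new}.

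For~\eqref{eq:xplus-v-new}, I will subtract \(v_k\) from the first update:
\[
x_{k+1}-v_k=(1-a)x_k+av_k-v_k=(1-a)(x_k-v_k).
\]

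For~\eqref{eq:xplus-z-new}, I will use the definition \(z_{k+1}=(1-a)v_k+ax_{k+1}\), which gives
\[
x_{k+1}-z_{k+1}=(1-a)x_{k+1}-(1-a)v_k=(1-a)(x_{k+1}-v_k).
\]
Inserting~\eqref{eq:xplus-v-new} into the right-hand side then yields the factor \((1-a)^2(x_k-v_k)\).

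For~\eqref{eq:mismatch-decomp-new}, I will split the difference through \(z_{k+1}\):
\[
x_{k+1}-v_{k+1}=(x_{k+1}-z_{k+1})+(z_{k+1}-v_{k+1}).
\]
Applying~\eqref{eq:xplus-z-new} to the first bracket gives the claim.

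There is no genuine obstacle here. The only point needing care is to use the constant-parameter relation \(b_k=a_k=a\), which follows from \(\gamma_k\equiv\widehat\mu\). This relation is what makes the same coefficient \(a\) appear in both the \(x\)- and \(z\)-updates, and hence produces the clean factor \((1-a)^2\).

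It is also worth noting how the result will be used later. By~\eqref{eq:q-def-new}, one has \(z_{k+1}-v_{k+1}=\tfrac{a}{\widehat\mu}q_{k+1}\). Hence~\eqref{eq:mismatch-decomp-new} expresses the mismatch \(x_{k+1}-v_{k+1}\) through \(x_k-v_k\) and the prox residual. This is the form in which the mismatch will be bounded in the one-step inequality.
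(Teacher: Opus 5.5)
Your proposal is correct and follows the paper's proof step for step. You subtract \(v_k\) from the \(x\)-update, rewrite \(x_{k+1}-z_{k+1}\) as \((1-a)(x_{k+1}-v_k)\), and split \(x_{k+1}-v_{k+1}\) through \(z_{k+1}\); the paper does exactly the same.
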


\begin{proof}
The first identity follows directly from
\[
    x_{k+1}=(1-a)x_k+av_k.
\]
Since
\[
    z_{k+1}=(1-a)v_k+ax_{k+1},
\]
we also have
\[
    x_{k+1}-z_{k+1}
    =
    (1-a)(x_{k+1}-v_k),
\]
which gives~\eqref{eq:xplus-z-new}. The last identity follows by writing
\[
    x_{k+1}-v_{k+1}
    =
    (x_{k+1}-z_{k+1})+(z_{k+1}-v_{k+1}).
\]
\end{proof}

We shall also use the following identity for the distance of \(x_k\) to the solution.

\begin{lemma}[Recursion for \(x_k\)]
\label{lem:x-recursion-new}
Let
\[
    X_k:=\norm{x_k-x^\star}^2,
    \qquad
    V_k:=\norm{v_k-x^\star}^2,
    \qquad
    D_k:=\norm{x_k-v_k}^2.
\]
Then
\begin{equation}
\label{eq:X-recursion-new}
    X_{k+1}
    =
    (1-a)X_k+aV_k-a(1-a)D_k.
\end{equation}
\end{lemma}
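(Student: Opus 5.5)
The identity \eqref{eq:X-recursion-new} is the standard exact formula for the squared distance of a convex combination to a fixed point. I will apply it to the first update in \eqref{eq:constant-prox-naggs-theory}, namely \(x_{k+1}=(1-a)x_k+av_k\). Subtracting \(x^\star\) from both sides gives
\[
    x_{k+1}-x^\star=(1-a)(x_k-x^\star)+a(v_k-x^\star),
\]
so it suffices to prove, for arbitrary vectors \(p,q\in\R^d\) and \(a\in(0,1)\), that
\[
    \norm{(1-a)p+aq}^2=(1-a)\norm{p}^2+a\norm{q}^2-a(1-a)\norm{p-q}^2 .
\]
Taking \(p=x_k-x^\star\) and \(q=v_k-x^\star\), we have \(p-q=x_k-v_k\), and the identity becomes exactly \eqref{eq:X-recursion-new}.

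To verify the vector identity, I will expand both sides. The left-hand side equals
\[
    (1-a)^2\norm{p}^2+2a(1-a)\ip{p}{q}+a^2\norm{q}^2 .
\]
For the right-hand side, I expand \(\norm{p-q}^2=\norm{p}^2-2\ip{p}{q}+\norm{q}^2\). The coefficient of \(\norm{p}^2\) is then \((1-a)-a(1-a)=(1-a)^2\), the coefficient of \(\norm{q}^2\) is \(a-a(1-a)=a^2\), and the cross term is \(2a(1-a)\ip{p}{q}\). The two sides agree term by term.

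No step is a real obstacle. The only point needing care is the sign and coefficient bookkeeping in the cross term. The identity uses nothing about \(f\), \(r\), or the proximal step: it holds for any sequence satisfying the \(x\)-update with constant \(a\), and for any reference point in place of \(x^\star\).
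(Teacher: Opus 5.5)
Your proof is correct and follows essentially the same route as the paper: subtract \(x^\star\) from the \(x\)-update and apply the convex-combination identity \(\norm{(1-a)p+aq}^2=(1-a)\norm{p}^2+a\norm{q}^2-a(1-a)\norm{p-q}^2\) with \(p=x_k-x^\star\), \(q=v_k-x^\star\). The paper uses that identity without proof, whereas you also verify it by expansion; your coefficient bookkeeping is right.
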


\begin{proof}
Since \(x_{k+1}=(1-a)x_k+av_k\), we have
\[
    x_{k+1}-x^\star
    =
    (1-a)(x_k-x^\star)+a(v_k-x^\star).
\]
Using the identity
\[
    \norm{(1-a)p+aq}^2
    =
    (1-a)\norm{p}^2+a\norm{q}^2-a(1-a)\norm{p-q}^2
\]
gives~\eqref{eq:X-recursion-new}.
\end{proof}

\subsection{A one-step inequality}

We now combine the three-point inequality with the smoothness of \(f\). 
The estimate below is written with a parameter \(\mu_f\ge0\) such that \(f\) is
\(\mu_f\)-strongly convex. In the merely convex case, we simply take \(\mu_f=0\). 
When the strongly convex setting is considered, \(\mu_f\) is the strong convexity
constant of \(f\).
\begin{proposition}[One-step inequality]
\label{prop:fundamental-new}
Let \((x_{k+1},z_{k+1},v_{k+1})\) be generated by
\eqref{eq:constant-prox-naggs-theory}. Assume that \(f\) is \(L\)-smooth and
\(\mu_f\)-strongly convex, with the convention \(\mu_f=0\) in the merely convex
case. Then, for every \(u\in\R^d\),
\begin{align}
\label{eq:fundamental-new}
    F(v_{k+1})-F(u)
    &\le
    \frac{\widehat\mu}{2a}
    \left(
        \norm{u-z_{k+1}}^2
        -\norm{u-v_{k+1}}^2
        -\norm{v_{k+1}-z_{k+1}}^2
    \right)
    \nonumber\\
    &\quad
    -\frac{\mu_f}{2}\norm{u-x_{k+1}}^2
    +\frac{L}{2}\norm{v_{k+1}-x_{k+1}}^2.
\end{align}
\end{proposition}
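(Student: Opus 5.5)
The inequality follows from combining the three-point inequality of Lemma~\ref{lem:three-point-new} with two standard bounds on \(f\): the descent lemma from \(L\)-smoothness, and the lower bound from \(\mu_f\)-strong convexity. Both bounds are taken at the point \(x_{k+1}\) where the gradient is evaluated. No kinematic identities are needed; the mismatch between \(x_{k+1}\) and \(v_{k+1}\) simply remains as the last term on the right-hand side.

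\emph{Step 1.} Rearrange~\eqref{eq:three-point-new} so that all quadratic terms sit on one side:
\[
    r(v_{k+1})-r(u)
    \le
    \ip{\nabla f(x_{k+1})}{u-v_{k+1}}
    +\frac{\widehat\mu}{2a}\left(\norm{u-z_{k+1}}^2-\norm{u-v_{k+1}}^2-\norm{v_{k+1}-z_{k+1}}^2\right).
\]

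\emph{Step 2.} Bound \(f(v_{k+1})-f(u)\) through the anchor \(x_{k+1}\). By \(L\)-smoothness,
\[
    f(v_{k+1})\le f(x_{k+1})+\ip{\nabla f(x_{k+1})}{v_{k+1}-x_{k+1}}+\frac{L}{2}\norm{v_{k+1}-x_{k+1}}^2 .
\]
By \(\mu_f\)-strong convexity, which reduces to plain convexity when \(\mu_f=0\),
\[
    f(u)\ge f(x_{k+1})+\ip{\nabla f(x_{k+1})}{u-x_{k+1}}+\frac{\mu_f}{2}\norm{u-x_{k+1}}^2 .
\]
Subtracting the second inequality from the first, the values \(f(x_{k+1})\) cancel. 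The two inner products combine into \(\ip{\nabla f(x_{k+1})}{v_{k+1}-u}\). This gives
\[
    f(v_{k+1})-f(u)\le \ip{\nabla f(x_{k+1})}{v_{k+1}-u}-\frac{\mu_f}{2}\norm{u-x_{k+1}}^2+\frac{L}{2}\norm{v_{k+1}-x_{k+1}}^2 .
\]

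\emph{Step 3.} Add the inequalities from Steps 1 and 2. The linear terms \(\ip{\nabla f(x_{k+1})}{u-v_{k+1}}\) and \(\ip{\nabla f(x_{k+1})}{v_{k+1}-u}\) cancel exactly, and what remains is precisely~\eqref{eq:fundamental-new}.

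If \(u\notin\operatorname{dom} r\), then \(F(u)=+\infty\) and the inequality is trivial. Otherwise every quantity is finite, and \(v_{k+1}\in\operatorname{dom} r\) automatically. There is no real obstacle in this argument. The only point requiring care is to expand both function inequalities around \(x_{k+1}\) rather than around \(v_{k+1}\); this choice is what makes the gradient inner products cancel exactly. As a result, the mismatch appears only through the \(\frac{L}{2}\norm{v_{k+1}-x_{k+1}}^2\) term, which is left to be absorbed later under the condition \(\widehat\mu\ge L\).
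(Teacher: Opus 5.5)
Your proof is correct and follows essentially the same route as the paper: you combine the three-point inequality of Lemma~\ref{lem:three-point-new} with the $L$-smoothness descent bound and the $\mu_f$-strong convexity lower bound, both anchored at $x_{k+1}$, so that the gradient inner products cancel. The only difference is bookkeeping order, which is immaterial: you subtract the two $f$-bounds first, whereas the paper adds the descent bound to the three-point inequality and then substitutes the strong-convexity bound.
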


\begin{proof}
By \(L\)-smoothness of \(f\),
\[
    f(v_{k+1})
    \le
    f(x_{k+1})
    +\ip{\nabla f(x_{k+1})}{v_{k+1}-x_{k+1}}
    +\frac{L}{2}\norm{v_{k+1}-x_{k+1}}^2.
\]
Adding this inequality to~\eqref{eq:three-point-new} gives
\begin{align*}
    F(v_{k+1})
    &\le
    f(x_{k+1})
    +\ip{\nabla f(x_{k+1})}{u-x_{k+1}}
    +r(u)
    \\
    &\quad
    +\frac{\widehat\mu}{2a}\norm{u-z_{k+1}}^2
    -\frac{\widehat\mu}{2a}\norm{u-v_{k+1}}^2
    -\frac{\widehat\mu}{2a}\norm{v_{k+1}-z_{k+1}}^2
    \\
    &\quad
    +\frac{L}{2}\norm{v_{k+1}-x_{k+1}}^2.
\end{align*}
Using the \(\mu_f\)-strong convexity of \(f\),
\[
    f(x_{k+1})
    +\ip{\nabla f(x_{k+1})}{u-x_{k+1}}
    \le
    f(u)-\frac{\mu_f}{2}\norm{u-x_{k+1}}^2.
\]
Substitution gives the result.
\end{proof}

We next apply Proposition~\ref{prop:fundamental-new} with \(u=x^\star\). The only point to
expand is the term \(\norm{x^\star-z_{k+1}}^2\).

\begin{lemma}[Expansion at the solution]
\label{lem:expanded-new}
Let
\[
    G_k:=F(v_k)-F^\star,
    \qquad
    V_k:=\norm{v_k-x^\star}^2,
    \qquad
    X_k:=\norm{x_k-x^\star}^2,
\]
and
\[
    D_k:=\norm{x_k-v_k}^2,
    \qquad
    R_{k+1}:=\norm{v_{k+1}-z_{k+1}}^2,
    \qquad
    M_{k+1}:=\norm{v_{k+1}-x_{k+1}}^2.
\]
Then
\begin{align}
\label{eq:expanded-new}
    G_{k+1}
    +\frac{\widehat\mu}{2a}V_{k+1}
    &\le
    \frac{\widehat\mu(1-a)}{2a}V_k
    +\frac{\widehat\mu-\mu_f}{2}X_{k+1}
    \nonumber\\
    &\quad
    -\frac{\widehat\mu(1-a)^3}{2}D_k
    -\frac{\widehat\mu}{2a}R_{k+1}
    +\frac{L}{2}M_{k+1}.
\end{align}
\end{lemma}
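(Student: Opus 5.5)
The plan is to apply Proposition~\ref{prop:fundamental-new} with \(u=x^\star\) and then rewrite the single term \(\norm{x^\star-z_{k+1}}^2\) using the affine structure of \(z_{k+1}\). With \(u=x^\star\), the left-hand side of~\eqref{eq:fundamental-new} is exactly \(G_{k+1}\). On the right-hand side, the notation of the statement turns \(\norm{x^\star-v_{k+1}}^2\) into \(V_{k+1}\), \(\norm{v_{k+1}-z_{k+1}}^2\) into \(R_{k+1}\), \(\norm{x^\star-x_{k+1}}^2\) into \(X_{k+1}\), and \(\norm{v_{k+1}-x_{k+1}}^2\) into \(M_{k+1}\). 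Moving \(\frac{\widehat\mu}{2a}V_{k+1}\) to the left, it only remains to bound \(\frac{\widehat\mu}{2a}\norm{x^\star-z_{k+1}}^2\).

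For that term I use \(z_{k+1}=(1-a)v_k+ax_{k+1}\). This gives
\[
z_{k+1}-x^\star=(1-a)(v_k-x^\star)+a(x_{k+1}-x^\star).
\]
I then apply the same convex-combination identity as in the proof of Lemma~\ref{lem:x-recursion-new}, which yields
\[
\norm{z_{k+1}-x^\star}^2=(1-a)V_k+aX_{k+1}-a(1-a)\norm{v_k-x_{k+1}}^2 .
\]
By~\eqref{eq:xplus-v-new} in Lemma~\ref{lem:kinematic-new}, \(v_k-x_{k+1}=-(1-a)(x_k-v_k)\). Hence \(\norm{v_k-x_{k+1}}^2=(1-a)^2D_k\), and the last term becomes \(-a(1-a)^3D_k\).

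Multiplying by \(\widehat\mu/(2a)\) produces \(\frac{\widehat\mu(1-a)}{2a}V_k+\frac{\widehat\mu}{2}X_{k+1}-\frac{\widehat\mu(1-a)^3}{2}D_k\). Combining the \(X_{k+1}\) coefficient with the \(-\frac{\mu_f}{2}X_{k+1}\) term already present in~\eqref{eq:fundamental-new} gives \(\frac{\widehat\mu-\mu_f}{2}X_{k+1}\), and the remaining terms \(-\frac{\widehat\mu}{2a}R_{k+1}+\frac{L}{2}M_{k+1}\) are carried over unchanged. This is exactly~\eqref{eq:expanded-new}, and in fact the inequality is inherited with no additional loss beyond Proposition~\ref{prop:fundamental-new}.

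There is no real obstacle here. The only delicate point is the sign and power bookkeeping in the \(D_k\) term: the factor \((1-a)^2\) comes from the kinematic identity, and the extra factor \(a(1-a)\) comes from the convex-combination identity, which together give \((1-a)^3\).
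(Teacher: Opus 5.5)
Your proposal is correct and follows essentially the same route as the paper: apply Proposition~\ref{prop:fundamental-new} at \(u=x^\star\), expand \(\norm{x^\star-z_{k+1}}^2\) via the convex-combination identity, and use \(\norm{x_{k+1}-v_k}^2=(1-a)^2D_k\) from Lemma~\ref{lem:kinematic-new}. Your bookkeeping of the \(X_{k+1}\) coefficient and of the factor \((1-a)^3\) in the \(D_k\) term matches the paper's.
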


\begin{proof}
Applying Proposition~\ref{prop:fundamental-new} with \(u=x^\star\) gives
\begin{align*}
    G_{k+1}
    +\frac{\widehat\mu}{2a}V_{k+1}
    &\le
    \frac{\widehat\mu}{2a}\norm{x^\star-z_{k+1}}^2
    -\frac{\mu_f}{2}X_{k+1}
    \\
    &\quad
    -\frac{\widehat\mu}{2a}R_{k+1}
    +\frac{L}{2}M_{k+1}.
\end{align*}
Since
\[
    z_{k+1}=(1-a)v_k+ax_{k+1},
\]
we have
\[
    \norm{x^\star-z_{k+1}}^2
    =
    (1-a)V_k+aX_{k+1}
    -a(1-a)\norm{x_{k+1}-v_k}^2.
\]
By Lemma~\ref{lem:kinematic-new},
\[
    \norm{x_{k+1}-v_k}^2=(1-a)^2D_k.
\]
Substituting these two identities gives~\eqref{eq:expanded-new}.
\end{proof}

\subsection{Why the first Lyapunov candidate does not close}

Let us explain the main difficulty before proving the final contraction. A first natural candidate is
\[
    G_k+\frac{\widehat\mu}{2a}V_k.
\]
This is the expected energy if one tries to imitate a standard strongly convex proximal analysis.
However, Lemma~\ref{lem:expanded-new} contains the additional term
\[
    \frac{L}{2}M_{k+1}
    =
    \frac{L}{2}\norm{v_{k+1}-x_{k+1}}^2.
\]
This term comes from the mismatch between the gradient point \(x_{k+1}\) and the proximal point
\(v_{k+1}\).

If one takes the natural choice \(\widehat\mu=\mu_f\), the term involving \(X_{k+1}\) disappears
from~\eqref{eq:expanded-new}. But the mismatch term remains, and it cannot be absorbed in
general unless the proximal quadratic model is strong enough compared with the smoothness of
\(f\). This is why we impose the sufficient condition
\[
    \widehat\mu\ge L.
\]
This condition allows us to absorb the mismatch term. The price to pay is that, in general,
\(\widehat\mu>\mu_f\), and therefore the positive term
\[
    \frac{\widehat\mu-\mu_f}{2}X_{k+1}
\]
appears in~\eqref{eq:expanded-new}. The identity~\eqref{eq:X-recursion-new} then shows how to
handle it: we add a multiple of \(X_k=\norm{x_k-x^\star}^2\) to the Lyapunov function.

This gives the augmented Lyapunov function
\[
    \mathcal L_k
    =
    G_k
    +bV_k
    +cX_k,
\]
with suitable constants \(b>0\) and \(c>0\).

\subsection{Absorbing the mismatch term}

We now show that \(\widehat\mu\ge L\) is sufficient to remove the mismatch term from the
one-step inequality.

\begin{lemma}[Absorption of the mismatch term]
\label{lem:mismatch-absorption-new}
Assume that \(\widehat\mu\ge L\). Then
\begin{equation}
\label{eq:mismatch-absorption-new}
    -\frac{\widehat\mu(1-a)^3}{2}D_k
    -\frac{\widehat\mu}{2a}R_{k+1}
    +\frac{L}{2}M_{k+1}
    \le 0.
\end{equation}
\end{lemma}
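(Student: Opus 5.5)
The plan is to bound \(M_{k+1}\) directly in terms of \(D_k\) and \(R_{k+1}\) using the decomposition of the mismatch from Lemma~\ref{lem:kinematic-new}, and then compare coefficients using \(\widehat\mu\ge L\). By~\eqref{eq:mismatch-decomp-new},
\[
    x_{k+1}-v_{k+1}=(1-a)^2(x_k-v_k)+(z_{k+1}-v_{k+1}),
\]
so \(M_{k+1}\) is the squared norm of a sum of two vectors. The first has squared norm \((1-a)^4D_k\), and the second has squared norm \(R_{k+1}\).

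The key step is to split this sum with weights matched to the coefficients in~\eqref{eq:mismatch-absorption-new}. I would write it as a convex combination:
\[
    x_{k+1}-v_{k+1}
    =(1-a)\bigl[(1-a)(x_k-v_k)\bigr]+a\Bigl[\tfrac1a(z_{k+1}-v_{k+1})\Bigr].
\]
Convexity of \(\norm{\cdot}^2\) (Jensen's inequality) then gives
\[
    M_{k+1}\le (1-a)\cdot(1-a)^2D_k+a\cdot\frac{1}{a^2}R_{k+1}
    =(1-a)^3D_k+\frac1a R_{k+1}.
\]
Equivalently, one can use Young's inequality \(\norm{p+q}^2\le(1+\varepsilon)\norm{p}^2+(1+\varepsilon^{-1})\norm{q}^2\) with \(\varepsilon=a/(1-a)\). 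With this choice, \(1+\varepsilon=1/(1-a)\) and \(1+\varepsilon^{-1}=1/a\).

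Multiplying by \(L/2\) and using \(L\le\widehat\mu\) gives
\[
    \frac L2M_{k+1}\le\frac{L(1-a)^3}{2}D_k+\frac{L}{2a}R_{k+1}
    \le\frac{\widehat\mu(1-a)^3}{2}D_k+\frac{\widehat\mu}{2a}R_{k+1}.
\]
This is exactly~\eqref{eq:mismatch-absorption-new}.

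The only real obstacle is choosing the splitting weight correctly. A naive bound \(\norm{p+q}^2\le2\norm{p}^2+2\norm{q}^2\) would lose a factor of two and force a stronger condition such as \(\widehat\mu\ge 2L\). It would also fail to match the powers \((1-a)^3\) and \(1/a\). The weight \(\varepsilon=a/(1-a)\) is the one that produces both coefficients simultaneously, so that the single condition \(\widehat\mu\ge L\) suffices.
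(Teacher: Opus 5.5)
Your proof is correct and is essentially the paper's argument: the same decomposition from Lemma~\ref{lem:kinematic-new}, then Young's inequality with weights $1/(1-a)$ and $1/a$ to get $M_{k+1}\le(1-a)^3D_k+\frac1a R_{k+1}$, then a coefficient comparison using $\widehat\mu\ge L$. (Your closing aside overstates the loss from the naive bound, which in fact requires $\widehat\mu\ge 2L\max\{a,1-a\}$ and so reduces to $\widehat\mu\ge L$ at $a=1/2$; this does not affect the proof.)
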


\begin{proof}
By Lemma~\ref{lem:kinematic-new},
\[
    x_{k+1}-v_{k+1}
    =
    (1-a)^2(x_k-v_k)+(z_{k+1}-v_{k+1}).
\]
Using Young's inequality in the form
\[
    \norm{p+q}^2
    \le
    \frac{1}{1-a}\norm{p}^2+\frac{1}{a}\norm{q}^2,
\]
with
\[
    p=(1-a)^2(x_k-v_k),
    \qquad
    q=z_{k+1}-v_{k+1},
\]
we obtain
\[
    M_{k+1}
    \le
    (1-a)^3D_k+\frac1a R_{k+1}.
\]
Therefore,
\begin{align*}
    &-\frac{\widehat\mu(1-a)^3}{2}D_k
    -\frac{\widehat\mu}{2a}R_{k+1}
    +\frac{L}{2}M_{k+1}
    \\
    &\qquad\le
    -\frac{\widehat\mu-L}{2}(1-a)^3D_k
    -\frac{\widehat\mu-L}{2a}R_{k+1}.
\end{align*}
The right-hand side is nonpositive because \(\widehat\mu\ge L\).
\end{proof}

Combining Lemma~\ref{lem:expanded-new} and Lemma~\ref{lem:mismatch-absorption-new} gives
the following simplified estimate.

\begin{proposition}[Reduced one-step inequality]
\label{prop:reduced-one-step-new}
Assume that \(\widehat\mu\ge L\). Then
\begin{equation}
\label{eq:reduced-one-step-new}
    G_{k+1}
    +
    \frac{\widehat\mu}{2a}V_{k+1}
    \le
    \frac{\widehat\mu(1-a)}{2a}V_k
    +
    \frac{\widehat\mu-\mu_f}{2}X_{k+1}.
\end{equation}
\end{proposition}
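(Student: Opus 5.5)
The reduced inequality follows directly from Lemma~\ref{lem:expanded-new} and Lemma~\ref{lem:mismatch-absorption-new}. I would start from~\eqref{eq:expanded-new}, which bounds \(G_{k+1}+\frac{\widehat\mu}{2a}V_{k+1}\) by
\[
    \frac{\widehat\mu(1-a)}{2a}V_k+\frac{\widehat\mu-\mu_f}{2}X_{k+1}
\]
plus a remainder made of three terms:
\[
    -\frac{\widehat\mu(1-a)^3}{2}D_k
    -\frac{\widehat\mu}{2a}R_{k+1}
    +\frac{L}{2}M_{k+1}.
\]
Lemma~\ref{lem:expanded-new} itself holds under Assumption~\ref{ass:deterministic-convex}, with \(\mu_f\ge0\) the strong convexity constant of \(f\) and \(\mu_f=0\) in the merely convex case. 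So no extra hypothesis beyond \(\widehat\mu\ge L\) is needed here.

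Next, I would observe that this remainder is exactly the left-hand side of~\eqref{eq:mismatch-absorption-new}. Under the assumption \(\widehat\mu\ge L\), Lemma~\ref{lem:mismatch-absorption-new} shows it is nonpositive, so it can be dropped from the right-hand side of~\eqref{eq:expanded-new}. That yields~\eqref{eq:reduced-one-step-new} immediately.

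There is no real obstacle, since the substantive work was already done in Lemma~\ref{lem:mismatch-absorption-new}. The only thing to check is bookkeeping: the coefficient of \(D_k\) produced in the proof of Lemma~\ref{lem:expanded-new} is \(\frac{\widehat\mu}{2a}\cdot a(1-a)\cdot(1-a)^2=\frac{\widehat\mu(1-a)^3}{2}\), which matches the coefficient used in the absorption lemma. One should also note that the sign of \(\frac{\widehat\mu-\mu_f}{2}X_{k+1}\) plays no role at this stage; that term is simply carried along to be handled later through Lemma~\ref{lem:x-recursion-new} and the augmented Lyapunov function.
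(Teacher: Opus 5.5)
Your proposal is correct and is the same argument as the paper. The paper's proof likewise drops the remainder in \eqref{eq:expanded-new}, which is nonpositive by Lemma~\ref{lem:mismatch-absorption-new} when \(\widehat\mu\ge L\); your coefficient check on the \(D_k\) term is accurate.
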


\begin{proof}
This follows directly from Lemma~\ref{lem:expanded-new} and
Lemma~\ref{lem:mismatch-absorption-new}.
\end{proof}

\subsection{Strongly convex case: augmented Lyapunov function}

We now prove the contraction of the augmented Lyapunov function. Define
\begin{equation}
\label{eq:b-beta-def-new}
    b:=\frac{\widehat\mu}{2a},
    \qquad
    \beta:=\frac{\widehat\mu-\mu_f}{2}.
\end{equation}
With this notation, Proposition~\ref{prop:reduced-one-step-new} becomes
\begin{equation}
\label{eq:reduced-one-step-b-beta-new}
    G_{k+1}+bV_{k+1}
    \le
    b(1-a)V_k+\beta X_{k+1}.
\end{equation}
Let \(c>0\), to be chosen below, and define
\begin{equation}
\label{eq:augmented-lyapunov-new}
    \mathcal L_k
    :=
    G_k+bV_k+cX_k.
\end{equation}

\begin{lemma}[Lyapunov contraction]
\label{lem:lyapunov-contraction-new}
% Assume that \(\widehat\mu\ge L\). Let \(b\) and \(\beta\) be defined by
Assume that Assumption~\ref{ass:deterministic-strongly-convex} holds and that
\(\widehat\mu\ge L\). Let \(b\) and \(\beta\) be defined by
\eqref{eq:b-beta-def-new}. Choose \(c>0\) such that
\begin{equation}
\label{eq:c-interval-new}
    \frac{\beta(1-a)}{a}
    <
    c
    <
    \frac{\widehat\mu+\mu_F}{2a}-\beta .
\end{equation}
Then
\[
    \mathcal L_{k+1}\le \theta \mathcal L_k,
    \qquad \forall k\ge 0,
\]
where
\begin{equation}
\label{eq:theta-def-new}
    \theta
    :=
    \max\left\{
    \frac{b(1-a)+a(\beta+c)}{b+\mu_F/2},
    \frac{(1-a)(\beta+c)}{c}
    \right\}.
\end{equation}
Moreover, \(\theta<1\).
\end{lemma}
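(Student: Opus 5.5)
Starting from the reduced one-step inequality \eqref{eq:reduced-one-step-b-beta-new}, we add \(cX_{k+1}\) to both sides and then rewrite \(X_{k+1}\) with Lemma~\ref{lem:x-recursion-new}. This gives
\begin{align*}
    \mathcal L_{k+1}
    &\le b(1-a)V_k+(\beta+c)X_{k+1}\\
    &= b(1-a)V_k+(\beta+c)\bigl[(1-a)X_k+aV_k-a(1-a)D_k\bigr]\\
    &\le \bigl(b(1-a)+a(\beta+c)\bigr)V_k+(\beta+c)(1-a)X_k ,
\end{align*}
where the last step drops the nonpositive \(D_k\) term, since \(\beta\ge0\) and \(c>0\). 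The right-hand side contains no \(G_k\). This is the point where strong convexity of \(F\) is needed: we have to convert part of \(G_k\) into a \(V_k\)-coefficient, or, read the other way, show that \(\theta\mathcal L_k\) dominates the right-hand side even with its \(G_k\) part included.

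We use the quadratic growth of \(F\) at its minimizer. Under Assumption~\ref{ass:deterministic-strongly-convex}, \(0\in\partial F(x^\star)\), so \(G_k=F(v_k)-F^\star\ge \frac{\mu_F}{2}V_k\). Hence, for any \(\theta\in[0,1]\),
\[
    \theta\mathcal L_k=\theta G_k+\theta bV_k+\theta cX_k\ge \theta\Bigl(b+\frac{\mu_F}{2}\Bigr)V_k+\theta cX_k .
\]
With this we can match coefficients against the bound above. It suffices that \(\theta(b+\mu_F/2)\ge b(1-a)+a(\beta+c)\) and \(\theta c\ge(1-a)(\beta+c)\). Both hold for the \(\theta\) defined in \eqref{eq:theta-def-new}. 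Since \(V_k,X_k\ge0\), we conclude \(\mathcal L_{k+1}\le\theta\mathcal L_k\). One small point: the intermediate inequality should not require \(\theta\ge\) anything involving \(G_k\) beyond \(\theta\ge0\), and this is fine because \(G_k\ge0\).

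It then remains to check \(\theta<1\), and here the interval \eqref{eq:c-interval-new} is exactly what is needed. The second term satisfies \((1-a)(\beta+c)<c\) if and only if \((1-a)\beta<ac\), i.e.\ \(c>\beta(1-a)/a\), which is the left inequality. For the first term, \(b(1-a)+a(\beta+c)<b+\mu_F/2\) is equivalent to \(a(\beta+c)<ab+\mu_F/2\). With \(b=\widehat\mu/(2a)\) this reads \(\beta+c<\widehat\mu/(2a)+\mu_F/(2a)\) (one should double-check the normalization of \(\mu_F/2\) by \(a\) here), which is the right inequality. The main thing to verify is that the interval is nonempty, so that the lemma is not vacuous. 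Nonemptiness means \(\beta/a<(\widehat\mu+\mu_F)/(2a)\), i.e.\ \(\widehat\mu-\mu_f<\widehat\mu+\mu_F\). This always holds because \(\mu_f+\mu_F>0\). I expect this coefficient bookkeeping, and getting the \(a\)-scalings consistent, to be the only delicate step; the structural argument is the two displays above.
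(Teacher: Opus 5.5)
Your proof is correct and follows the paper's argument step for step: you add \(cX_{k+1}\), substitute the recursion for \(X_{k+1}\), drop the \(D_k\) term, lower-bound \(\mathcal L_k\) via \(G_k\ge\frac{\mu_F}{2}V_k\), match coefficients, and check both ratio conditions and nonemptiness of the interval through \(\mu_f+\mu_F>0\). The normalization you flagged is right as written, since dividing \(a(\beta+c)<ab+\mu_F/2\) by \(a\) gives exactly the upper endpoint, and the fact \(\beta\ge0\) that you use comes from \(\widehat\mu\ge L\ge\mu_f\), which is how the paper justifies it.
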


\begin{proof}
Adding \(cX_{k+1}\) to~\eqref{eq:reduced-one-step-b-beta-new} gives
\[
    \mathcal L_{k+1}
    \le
    b(1-a)V_k+(\beta+c)X_{k+1}.
\]
Using the identity~\eqref{eq:X-recursion-new}, we get
\[
    \mathcal L_{k+1}
    \le
    b(1-a)V_k
    +(\beta+c)\big((1-a)X_k+aV_k-a(1-a)D_k\big).
\]
% Dropping the nonpositive term \(-a(1-a)(\beta+c)D_k\), we obtain
Since \(\widehat\mu\ge L\ge \mu_f\), we have \(\beta\ge0\). Hence
\(-a(1-a)(\beta+c)D_k\le0\), and dropping this term gives
\begin{equation}
\label{eq:L-upper-new}
    \mathcal L_{k+1}
    \le
    \big[b(1-a)+a(\beta+c)\big]V_k
    +(1-a)(\beta+c)X_k.
\end{equation}

Since \(F\) is \(\mu_F\)-strongly convex and \(x^\star\) minimizes \(F\),
\[
    G_k=F(v_k)-F^\star
    \ge
    \frac{\mu_F}{2}V_k.
\]
Hence
\[
    \mathcal L_k
    =
    G_k+bV_k+cX_k
    \ge
    \left(b+\frac{\mu_F}{2}\right)V_k+cX_k.
\]
Therefore, \(\mathcal L_{k+1}\le\theta\mathcal L_k\) holds with \(\theta\) defined in
\eqref{eq:theta-def-new}.

It remains to check that \(\theta<1\). The second ratio in~\eqref{eq:theta-def-new} is smaller
than one if and only if
\[
    (1-a)(\beta+c)<c,
\]
which is equivalent to
\[
    c>\frac{\beta(1-a)}{a}.
\]
The first ratio is smaller than one if and only if
\[
    b(1-a)+a(\beta+c)<b+\frac{\mu_F}{2}.
\]
Since \(b=\widehat\mu/(2a)\), this is equivalent to
\[
    c<\frac{\widehat\mu+\mu_F}{2a}-\beta.
\]
These are exactly the two inequalities in~\eqref{eq:c-interval-new}.

Finally, the interval in~\eqref{eq:c-interval-new} is nonempty. Indeed,
\[
\begin{aligned}
    \left(\frac{\widehat\mu+\mu_F}{2a}-\beta\right)
    -
    \frac{\beta(1-a)}{a}
    &=
    \frac{1}{a}
    \left(
        \frac{\widehat\mu+\mu_F}{2}
        -\beta
    \right)
    \\
    &=
    \frac{1}{a}
    \left(
        \frac{\widehat\mu+\mu_F}{2}
        -
        \frac{\widehat\mu-\mu_f}{2}
    \right)
    \\
    &=
    \frac{\mu_F+\mu_f}{2a}
    >0.
\end{aligned}
\]
Thus such a \(c\) exists, and for any such choice we have \(\theta<1\).
\end{proof}

We can now state the convergence theorem.

\begin{theorem}[Linear convergence]
\label{thm:linear-conservative-new}
Assume that Assumption~\ref{ass:deterministic-strongly-convex} holds. 
Assume also that Prox-NAG-GS is run in the constant-parameter regime
\[
    x_{k+1}=(1-a)x_k+av_k,
    \qquad
    z_{k+1}=(1-a)v_k+ax_{k+1},
\]
\[
    v_{k+1}
    =
    \prox_{\frac{a}{\widehat\mu}r}
    \left(
        z_{k+1}
        -
        \frac{a}{\widehat\mu}\nabla f(x_{k+1})
    \right),
\]
with \(a\in(0,1)\). Finally, assume that the algorithmic curvature parameter satisfies
\[
    \widehat\mu\ge L.
\]
Let
\[
    b:=\frac{\widehat\mu}{2a},
    \qquad
    \beta:=\frac{\widehat\mu-\mu_f}{2},
\]
and choose any
\[
    c\in
    \left(
        \frac{\beta(1-a)}{a},
        \frac{\widehat\mu+\mu_F}{2a}-\beta
    \right).
\]
Define
\[
    \mathcal L_k
    :=
    F(v_k)-F^\star
    +b\norm{v_k-x^\star}^2
    +c\norm{x_k-x^\star}^2.
\]
Then
\[
    \mathcal L_{k+1}\le \theta \mathcal L_k,
    \qquad \forall k\ge 0,
\]
where
\[
    \theta
    :=
    \max\left\{
    \frac{b(1-a)+a(\beta+c)}{b+\mu_F/2},
    \frac{(1-a)(\beta+c)}{c}
    \right\}
    <1.
\]
Consequently, for every \(k\ge0\),
\[
    F(v_k)-F^\star
    \le
    \mathcal L_0\theta^k,
\]
and
\[
    \norm{v_k-x^\star}^2
    \le
    \frac{\mathcal L_0}{b}\theta^k,
    \qquad
    \norm{x_k-x^\star}^2
    \le
    \frac{\mathcal L_0}{c}\theta^k.
\]
\end{theorem}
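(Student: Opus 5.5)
The theorem essentially packages Lemma~\ref{lem:lyapunov-contraction-new}, so the plan is to invoke it and then unroll the recursion. First I would check that the hypotheses match. Assumption~\ref{ass:deterministic-strongly-convex} is in force, the iteration is exactly~\eqref{eq:constant-prox-naggs-theory} with constant $a\in(0,1)$ and $\gamma_0=\widehat\mu$ (so $b_k=a_k=a$), and $\widehat\mu\ge L$. The constants $b,\beta$ coincide with~\eqref{eq:b-beta-def-new}, and the admissible interval for $c$ is exactly~\eqref{eq:c-interval-new}. This interval is nonempty, as shown at the end of the proof of Lemma~\ref{lem:lyapunov-contraction-new}, since its length is $(\mu_F+\mu_f)/(2a)>0$. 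Lemma~\ref{lem:lyapunov-contraction-new} then gives $\mathcal L_{k+1}\le\theta\mathcal L_k$ for all $k\ge0$, with $\theta<1$ given by~\eqref{eq:theta-def-new}.

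Next I would note that every term of $\mathcal L_k$ is nonnegative. Indeed, $F(v_k)-F^\star\ge0$ because $x^\star$ minimizes $F$, and $b,c>0$. Consequently $\theta\ge0$ may be used when iterating, and induction on $k$ gives $\mathcal L_k\le\theta^k\mathcal L_0$. Here $\mathcal L_0$ is finite provided $v_0=x_0\in\operatorname{dom} r$; if not, the bounds still hold trivially from $k=1$ on, since $v_1\in\operatorname{dom} r$, and the index can be shifted if needed.

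Finally, each displayed estimate comes from dropping nonnegative terms in $\mathcal L_k$:
\[
F(v_k)-F^\star\le\mathcal L_k,\qquad b\norm{v_k-x^\star}^2\le\mathcal L_k,\qquad c\norm{x_k-x^\star}^2\le\mathcal L_k,
\]
and each is then combined with $\mathcal L_k\le\theta^k\mathcal L_0$ and divided by $b$ or $c$ respectively.

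There is no real obstacle here; all the substantive work (absorbing the mismatch via $\widehat\mu\ge L$ and closing the augmented energy) is already done in Proposition~\ref{prop:reduced-one-step-new} and Lemma~\ref{lem:lyapunov-contraction-new}. The only points needing care are confirming that the constant-regime reduction $b_k=a$ is legitimate and that nonnegativity of $\mathcal L_k$ and $\theta$ justifies iterating the inequality.
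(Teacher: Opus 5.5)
Your proposal is correct and matches the paper's proof. Like the paper, you take the contraction directly from Lemma~\ref{lem:lyapunov-contraction-new}, iterate it to get \(\mathcal L_k\le\theta^k\mathcal L_0\), and obtain the three bounds by dropping the other nonnegative terms of \(\mathcal L_k\). Your extra checks are harmless refinements that the paper leaves implicit: nonnegativity of \(\theta\) (indeed \(\theta\ge 1-a>0\) from the second ratio, since \(\beta\ge0\)) and the caveat about \(\mathcal L_0\) being finite when \(x_0\) lies outside the domain of \(r\).
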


\begin{proof}
The contraction
\[
    \mathcal L_{k+1}\le \theta\mathcal L_k
\]
is exactly Lemma~\ref{lem:lyapunov-contraction-new}. Iterating it gives
\[
    \mathcal L_k\le \theta^k\mathcal L_0.
\]
Since
\[
    F(v_k)-F^\star\le \mathcal L_k,
    \qquad
    b\norm{v_k-x^\star}^2\le \mathcal L_k,
    \qquad
    c\norm{x_k-x^\star}^2\le \mathcal L_k,
\]
the three estimates follow.
\end{proof}

\begin{remark}[Role of the condition \(\widehat\mu\ge L\)]
The condition \(\widehat\mu\ge L\) is sufficient, not necessary. It means that the quadratic term
in the proximal subproblem is strong enough to dominate the smoothness error caused by the
mismatch between \(x_{k+1}\) and \(v_{k+1}\). Equivalently, the effective proximal-gradient stepsize
\(a/\widehat\mu\) is at most \(a/L\).

This condition is conservative. In the numerical experiments, the parameters are tuned empirically,
and the best values can be more aggressive than those covered by the theorem. The theorem should
therefore be read as a stability and convergence guarantee for Prox-NAG-GS, not as a complete
description of all parameter regimes that work in practice.
\end{remark}

\begin{remark}[On the rate]
The theorem proves a linear rate, but we do not claim that the contraction factor \(\theta\) is
optimal. For strongly convex composite problems, accelerated proximal-gradient methods can
achieve sharper worst-case rates. The contribution here is different: we prove that the semi-implicit
proximal coupling used by Prox-NAG-GS admits a Lyapunov function and converges linearly in a
well-defined deterministic regime.
\end{remark}

\begin{remark}[Why the additional \(X_k\) term is needed]
The term \(cX_k=c\norm{x_k-x^\star}^2\) is not added for cosmetic reasons. After absorbing the
mismatch term with \(\widehat\mu\ge L\), the one-step inequality still contains the positive term
\(\beta X_{k+1}\), where \(\beta=(\widehat\mu-\mu_f)/2\). The identity
\[
    X_{k+1}
    =
    (1-a)X_k+aV_k-a(1-a)D_k
\]
allows this term to be propagated through the Lyapunov function. This is the reason for using
the augmented energy
\[
    \mathcal L_k
    =
    F(v_k)-F^\star
    +b\norm{v_k-x^\star}^2
    +c\norm{x_k-x^\star}^2.
\]
\end{remark}

% \begin{remark}[Smooth case]
% When \(r=0\), the proximal operator becomes the identity map, and Prox-NAG-GS reduces to the
% corresponding smooth semi-implicit NAG-GS update in the constant-parameter regime considered
% in this section. Therefore, Theorem~\ref{thm:linear-conservative-new} also gives a linear
% convergence result for this smooth specialization, provided that \(\widehat\mu\ge L\), which remained unknown until now.
% \end{remark}
\begin{remark}[Smooth case]
When \(r=0\), the proximal operator becomes the identity map, and Prox-NAG-GS reduces to the
corresponding smooth semi-implicit NAG-GS update in the constant-parameter regime considered
in this section. Therefore, Theorem~\ref{thm:linear-conservative-new} also gives a linear
convergence result for this smooth specialization, provided that \(\widehat\mu\ge L\).

This should not be confused with an optimal accelerated rate, nor with a complete analysis of all
NAG-GS parameter regimes. It gives a conservative but rigorous linear-convergence regime for the
smooth semi-implicit update.
\end{remark}

\subsection{Convex case: best-iterate and averaged rates}
\label{subsec:convex-case}

We now record what the same analysis gives in the merely convex case. This result is not needed
for the linear convergence theorem above, but it is useful to complete the theoretical picture. It
shows that the augmented Lyapunov function is not specific to strong convexity: the additional
term \(\|x_k-x^\star\|^2\) is already the right quantity to obtain a descent estimate when
\(F=f+r\) is convex.

In this subsection, we assume that \(f\) is convex and \(L\)-smooth, that \(r\) is proper, closed,
convex and proximable, and that \(F=f+r\) admits a minimizer \(x^\star\). We keep the same
constant-parameter regime as in the previous subsections, with \(a\in(0,1)\), and we assume
\[
    \widehat\mu\ge L.
\]
Under this condition, the mismatch term between the gradient point \(x_{k+1}\) and the proximal
point \(v_{k+1}\) can be absorbed as before. We then obtain a descent inequality for the same
type of augmented Lyapunov function. As a consequence, the method satisfies an \(O(1/k)\) rate
for the best iterate and for the averaged iterate.

\begin{theorem}[Convex case]
\label{thm:convex-case}
Assume that Assumption~\ref{ass:deterministic-convex} holds. 
Assume also that Prox-NAG-GS is run in the constant-parameter regime
\[
    x_{k+1}=(1-a)x_k+av_k,
    \qquad
    z_{k+1}=(1-a)v_k+ax_{k+1},
\]
\[
    v_{k+1}
    =
    \prox_{\frac{a}{\widehat\mu}r}
    \left(
        z_{k+1}
        -
        \frac{a}{\widehat\mu}\nabla f(x_{k+1})
    \right),
\]
with \(a\in(0,1)\). Finally, assume that the algorithmic curvature parameter satisfies
\[
    \widehat\mu\ge L.
\]

Let
\[
    G_k:=F(v_k)-F^\star,
    \qquad
    V_k:=\norm{v_k-x^\star}^2,
    \qquad
    X_k:=\norm{x_k-x^\star}^2,
\]
and define
\[
    \mathcal E_k
    :=
    G_k
    +
    \frac{\widehat\mu}{2a}V_k
    +
    \frac{\widehat\mu(1-a)}{2a}X_k .
\]
Then, for every \(k\ge0\),
\begin{equation}
\label{eq:convex-lyap-descent}
    \mathcal E_{k+1}
    \le
    \mathcal E_k
    -
    G_k
    -
    \frac{\widehat\mu(1-a)}{2}\norm{x_k-v_k}^2 .
\end{equation}
Consequently,
\begin{equation}
\label{eq:convex-sum-gap}
    \sum_{i=0}^{k-1}
    \left(F(v_i)-F^\star\right)
    \le
    \mathcal E_0,
    \qquad k\ge1,
\end{equation}
and
\begin{equation}
\label{eq:convex-sum-distance}
    \sum_{i=0}^{k-1}
    \norm{x_i-v_i}^2
    \le
    \frac{2\mathcal E_0}{\widehat\mu(1-a)} .
\end{equation}
In particular,
\begin{equation}
\label{eq:convex-best-rate}
    \min_{0\le i\le k-1}
    \left(F(v_i)-F^\star\right)
    \le
    \frac{\mathcal E_0}{k}.
\end{equation}
Moreover, if
\[
    \bar v_k:=\frac1k\sum_{i=0}^{k-1}v_i,
\]
then
\begin{equation}
\label{eq:convex-ergodic-rate}
    F(\bar v_k)-F^\star
    \le
    \frac{\mathcal E_0}{k}.
\end{equation}
Finally,
\[
    F(v_k)\to F^\star .
\]
\end{theorem}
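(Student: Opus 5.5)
The plan is to reuse the reduced one-step inequality of Proposition~\ref{prop:reduced-one-step-new} with \(\mu_f=0\). We then propagate the leftover \(X_{k+1}\) term through the identity of Lemma~\ref{lem:x-recursion-new}, exactly as in the strongly convex case. The difference is that here the weight \(c\) on \(X_k\) is chosen so that the coefficients of \(V_k\) and \(X_k\) reproduce \(\mathcal E_k\) exactly. Proposition~\ref{prop:reduced-one-step-new} only needs convexity of \(f\) (take \(\mu_f=0\)), \(L\)-smoothness and \(\widehat\mu\ge L\). With \(b=\widehat\mu/(2a)\) it gives
\[
    G_{k+1}+bV_{k+1}\le b(1-a)V_k+\frac{\widehat\mu}{2}X_{k+1}.
\]

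\textbf{Step 1 (descent inequality).} Set \(c:=\widehat\mu(1-a)/(2a)\) and add \(cX_{k+1}\) to both sides. The coefficient of \(X_{k+1}\) on the right becomes \(\widehat\mu/2+c=\widehat\mu/(2a)\). Substituting \(X_{k+1}=(1-a)X_k+aV_k-a(1-a)D_k\) from Lemma~\ref{lem:x-recursion-new} yields three terms:
\[
    \frac{\widehat\mu}{2a}X_{k+1}
    =
    cX_k+\frac{\widehat\mu}{2}V_k-\frac{\widehat\mu(1-a)}{2}D_k .
\]
The coefficient of \(V_k\) is then \(b(1-a)+\widehat\mu/2=b\), so
\[
    G_{k+1}+bV_{k+1}+cX_{k+1}\le bV_k+cX_k-\frac{\widehat\mu(1-a)}{2}D_k .
\]
Writing \(bV_k+cX_k=\mathcal E_k-G_k\) gives exactly~\eqref{eq:convex-lyap-descent}. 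The only real content is this choice of \(c\): it is the unique value that makes the \(V_k\) coefficient close with no loss. In the convex case there is no strong-convexity slack, so an exact match is needed, and this is the step I expect to be delicate. Unlike the strongly convex proof, the \(D_k\) term is not discarded but kept.

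\textbf{Step 2 (telescoping and rates).} Summing~\eqref{eq:convex-lyap-descent} for \(i=0,\dots,k-1\) gives
\[
    \mathcal E_k+\sum_{i=0}^{k-1}\Bigl(G_i+\frac{\widehat\mu(1-a)}{2}\norm{x_i-v_i}^2\Bigr)\le\mathcal E_0 .
\]
Since \(G_k\ge0\) (because \(x^\star\) minimizes \(F\)), we have \(\mathcal E_k\ge0\). Both summands are nonnegative, so each partial sum is bounded by \(\mathcal E_0\); this yields~\eqref{eq:convex-sum-gap} and~\eqref{eq:convex-sum-distance}. The best-iterate bound~\eqref{eq:convex-best-rate} follows because the minimum is at most the average.

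The ergodic bound~\eqref{eq:convex-ergodic-rate} follows from Jensen's inequality, \(F(\bar v_k)\le\frac1k\sum_{i<k}F(v_i)\), which holds since \(F\) is convex.

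Finally, \(\sum_i G_i<\infty\) with \(G_i\ge0\) forces \(G_i\to0\), that is, \(F(v_k)\to F^\star\). One caveat: if \(v_0=x_0\notin\operatorname{dom} r\), then \(\mathcal E_0=+\infty\) and the bounds are vacuous. Convergence still holds after shifting the starting index to \(k=1\), since \(v_1\in\operatorname{dom}r\) by definition of the proximal map.
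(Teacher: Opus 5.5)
Your proposal is correct and follows the paper's proof essentially step for step. It uses the reduced one-step inequality with \(\mu_f=0\), the choice \(c=b(1-a)\) so that \(\widehat\mu/2+c=b\), and substitution of the \(X_{k+1}\) recursion while keeping the \(D_k\) term, followed by telescoping, min-below-average, Jensen, and summability of \(G_k\). Your remark about \(x_0\notin\mathrm{dom}\, r\) is a harmless refinement of a point the paper leaves implicit, namely that \(\mathcal E_0<\infty\).
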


\begin{proof}
Since \(f\) is only assumed convex in this subsection, we use the reduced one-step inequality
with \(\mu_f=0\). From Proposition~\ref{prop:reduced-one-step-new}, and since
\(\widehat\mu\ge L\), we have
\begin{equation}
\label{eq:convex-reduced-proof}
    G_{k+1}
    +
    \frac{\widehat\mu}{2a}V_{k+1}
    \le
    \frac{\widehat\mu(1-a)}{2a}V_k
    +
    \frac{\widehat\mu}{2}X_{k+1}.
\end{equation}
Set
\[
    b:=\frac{\widehat\mu}{2a},
    \qquad
    c:=\frac{\widehat\mu(1-a)}{2a}=b(1-a).
\]
Then \(\widehat\mu/2=ab\), and \eqref{eq:convex-reduced-proof} becomes
\[
    G_{k+1}+bV_{k+1}
    \le
    b(1-a)V_k+abX_{k+1}.
\]
Adding \(cX_{k+1}\) to both sides gives
\[
    \mathcal E_{k+1}
    \le
    b(1-a)V_k+(ab+c)X_{k+1}.
\]
Since \(c=b(1-a)\), we have
\[
    ab+c=ab+b(1-a)=b.
\]
Therefore,
\begin{equation}
\label{eq:convex-before-x-identity}
    \mathcal E_{k+1}
    \le
    b(1-a)V_k+bX_{k+1}.
\end{equation}

We now use the identity
\[
    X_{k+1}
    =
    (1-a)X_k+aV_k-a(1-a)\norm{x_k-v_k}^2 .
\]
Substituting this identity into \eqref{eq:convex-before-x-identity}, we obtain
\[
\begin{aligned}
    \mathcal E_{k+1}
    &\le
    b(1-a)V_k
    +
    b\left((1-a)X_k+aV_k-a(1-a)\norm{x_k-v_k}^2\right)\\
    &=
    bV_k
    +
    b(1-a)X_k
    -
    ab(1-a)\norm{x_k-v_k}^2 .
\end{aligned}
\]
Using \(c=b(1-a)\) and \(ab=\widehat\mu/2\), this becomes
\[
    \mathcal E_{k+1}
    \le
    bV_k+cX_k
    -
    \frac{\widehat\mu(1-a)}{2}\norm{x_k-v_k}^2 .
\]
Since
\[
    \mathcal E_k=G_k+bV_k+cX_k,
\]
we obtain
\[
    \mathcal E_{k+1}
    \le
    \mathcal E_k
    -
    G_k
    -
    \frac{\widehat\mu(1-a)}{2}\norm{x_k-v_k}^2,
\]
which proves \eqref{eq:convex-lyap-descent}.

Summing \eqref{eq:convex-lyap-descent} from \(i=0\) to \(k-1\) gives
\[
    \mathcal E_k
    +
    \sum_{i=0}^{k-1}G_i
    +
    \frac{\widehat\mu(1-a)}{2}
    \sum_{i=0}^{k-1}\norm{x_i-v_i}^2
    \le
    \mathcal E_0.
\]
Since \(\mathcal E_k\ge0\), this implies \eqref{eq:convex-sum-gap} and
\eqref{eq:convex-sum-distance}. The best-iterate estimate follows immediately:
\[
    k\min_{0\le i\le k-1}G_i
    \le
    \sum_{i=0}^{k-1}G_i
    \le
    \mathcal E_0.
\]

For the averaged iterate, convexity of \(F\) gives
\[
    F(\bar v_k)
    =
    F\left(\frac1k\sum_{i=0}^{k-1}v_i\right)
    \le
    \frac1k\sum_{i=0}^{k-1}F(v_i).
\]
Thus
\[
    F(\bar v_k)-F^\star
    \le
    \frac1k\sum_{i=0}^{k-1}\left(F(v_i)-F^\star\right)
    \le
    \frac{\mathcal E_0}{k}.
\]
Finally, since the nonnegative series \(\sum_{i=0}^{\infty}G_i\) is bounded by
\(\mathcal E_0\), we have \(G_k\to0\), that is, \(F(v_k)\to F^\star\).
\end{proof}

\begin{remark}
The theorem gives an \(O(1/k)\) rate for the best iterate and for the averaged iterate. It does
not claim an \(O(1/k)\) rate for the last iterate \(v_k\). The descent inequality implies
\(F(v_k)\to F^\star\), but a last-iterate rate would require an additional argument, for instance
a monotonicity property or a sharper control of the objective gaps.
\end{remark}

\section{Numerical tests}
\label{sec:numerics}

We test Prox-NAG-GS on four composite optimization benchmarks and one additional diagnostic experiment. The main benchmarks are intended to evaluate the practical behavior of the method. The deterministic tests use Elastic Net and Group Lasso objectives. The stochastic tests use softmax regression on MNIST, with either an entrywise \(\ell_1\) penalty or a Group Lasso penalty. These tests are intended to check two points: whether the proximal extension keeps the fast behavior observed for NAG-GS in deterministic problems, and how it behaves in stochastic learning when the nonsmooth term promotes sparsity.

In addition to these performance-oriented tests, we include a separate deterministic check of the theoretical regime. In the convergence proof, the algorithmic curvature parameter is required to satisfy the sufficient condition \(\widehat\mu\ge L\). In the main numerical comparisons, however, \(\widehat\mu\) is tuned empirically. The purpose of the additional test is therefore not to improve the best performance, but to verify that the conservative regime covered by the theory is numerically meaningful. We run Prox-NAG-GS with \(\widehat\mu=L\), monitor the objective gaps at both \(x_k\) and \(v_k\), and record the augmented Lyapunov quantity used in Theorem~\ref{thm:linear-conservative-new}.

All results are averaged over five random seeds. In the main performance benchmarks, all methods are tuned with Optuna~\cite{Akiba2019Optuna}, using the same tuning budget inside each benchmark. In the deterministic tests, the tuning criterion is based on the optimality gap. In the stochastic tests, the tuning criterion is based on validation performance. After each epoch, we evaluate the full regularized objective on the whole training set. Moreover, in all plots, solid curves show averages over five seeds, and shaded regions show one standard deviation.

For deterministic problems, we report the optimality gap \(F(x_k)-F^\star\), where \(F^\star\) is computed by a high-precision reference solver. Note that for Prox-NAG-GS, the deterministic performance plots report the objective value at \(x_k\), which is the practical output iterate in our implementation. The theory is stated for \(v_k\), the proximal variable used in the Lyapunov analysis. At a fixed point of the deterministic iteration, the two variables coincide. In the additional theory-regime test, we report both \(F(x_k)-F^\star\) and \(F(v_k)-F^\star\), precisely to connect the numerical output with the variable used in the proof. For stochastic problems, we report the full objective, the data-fit term, the regularization term, test accuracy, sparsity, and wall-clock time. All experiments were run on Google Colab using an A100 GPU. The code used to generate all tables and figures is publicly available; see the reproducibility paragraph at the end of this section.

\subsection{Deterministic Elastic Net}

The first deterministic benchmark is the Elastic Net problem
\begin{equation}
\label{eq:elastic-net-num}
    \min_{x\in\R^d}
    \frac12\norm{Ax-b}_2^2
    +\frac{\lambda_2}{2}\norm{x}_2^2
    +\lambda_1\norm{x}_1 .
\end{equation}
We compare Prox-NAG-GS with ISTA~\cite{DaubechiesDefriseDeMol2004},
FISTA~\cite{BeckTeboulle2009}, and Chambolle-Pock~\cite{ChambollePock2011}. We use two instances. In the first one, \(A\) is generated as a Gaussian random matrix. 
In the second one, \(A\) is a synthetic random matrix with controlled condition number close to \(10^3\). 
We refer to the two instances as the easy and hard cases, respectively.

\begin{table}[!htbp]
\centering
\caption{Deterministic Elastic Net. Final objective, iterations to reach the \(10^{-6}\) optimality gap, and wall-clock time. Values are averages over five seeds.}
\label{tab:elastic-net-summary}
\begin{tabular}{llccc}
\toprule
Instance & Method & Final obj. & Iterations to \(10^{-6}\) & Time (s) \\
\midrule
Easy & ISTA & 4.1050 & 95.4 & 0.1183 \\
Easy & FISTA & 4.1050 & 68.4 & 0.1163 \\
Easy & Chambolle-Pock & 4.1050 & 67.4 & \textbf{0.1016} \\
Easy & Prox-NAG-GS & \textbf{4.1050} & \textbf{28.0} & 0.1220 \\
\midrule
Hard & ISTA & 0.6798 & 100.0 & 0.1976 \\
Hard & FISTA & 0.6798 & 61.6 & \textbf{0.1897} \\
Hard & Chambolle-Pock & 0.6798 & 93.6 & 0.2063 \\
Hard & Prox-NAG-GS & \textbf{0.6798} & \textbf{24.4} & 0.2131 \\
\bottomrule
\end{tabular}
\end{table}

\FloatBarrier

Table~\ref{tab:elastic-net-summary} shows that Prox-NAG-GS reaches the same final objective as the baselines while requiring substantially fewer iterations. At the \(10^{-6}\) gap threshold, the iteration reduction ranges from \(2.41\times\) to \(3.41\times\) on the easy instance and from \(2.52\times\) to \(4.10\times\) on the hard instance.

\begin{figure}[!htbp]
\centering
\includegraphics[width=0.49\textwidth]{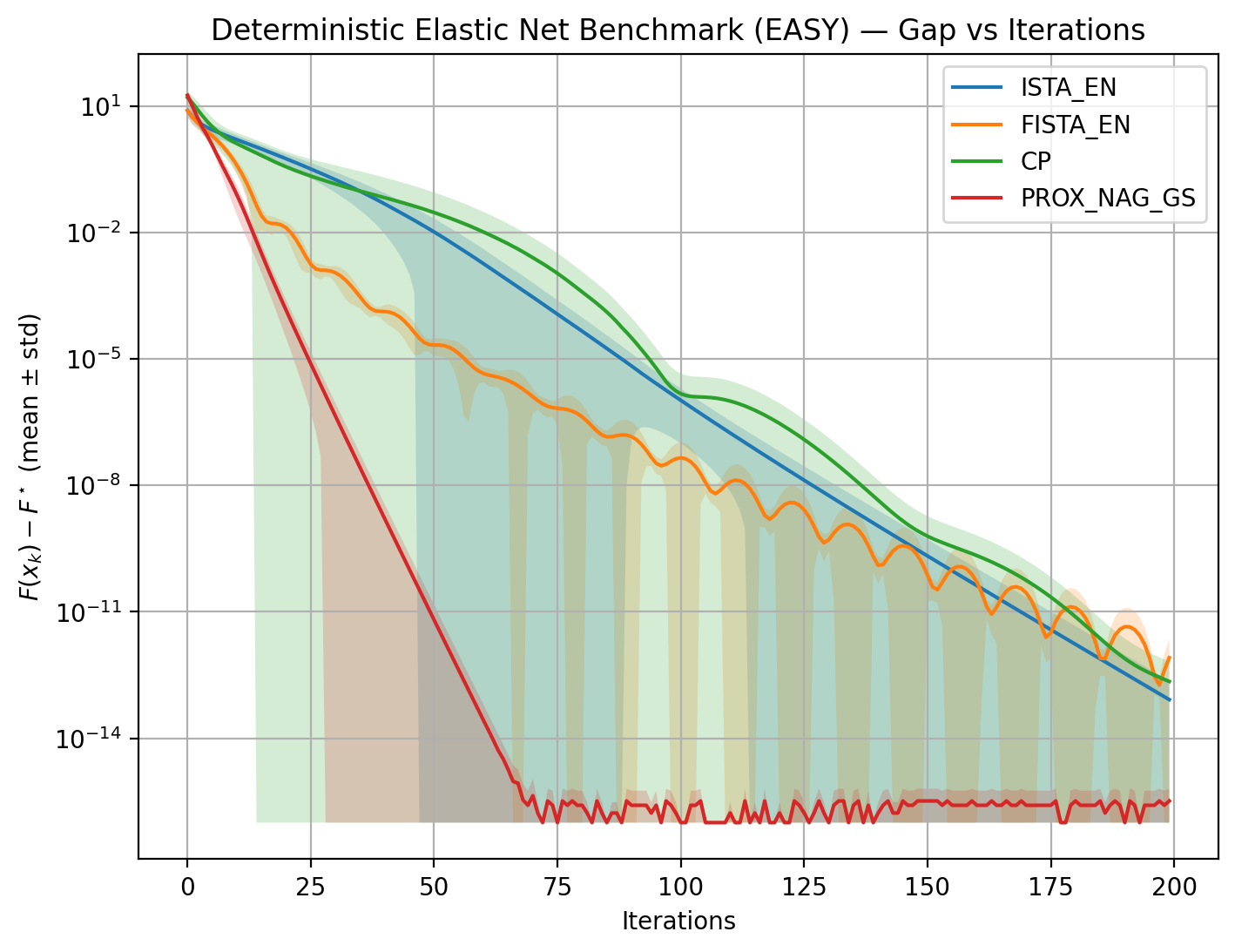}
\includegraphics[width=0.49\textwidth]{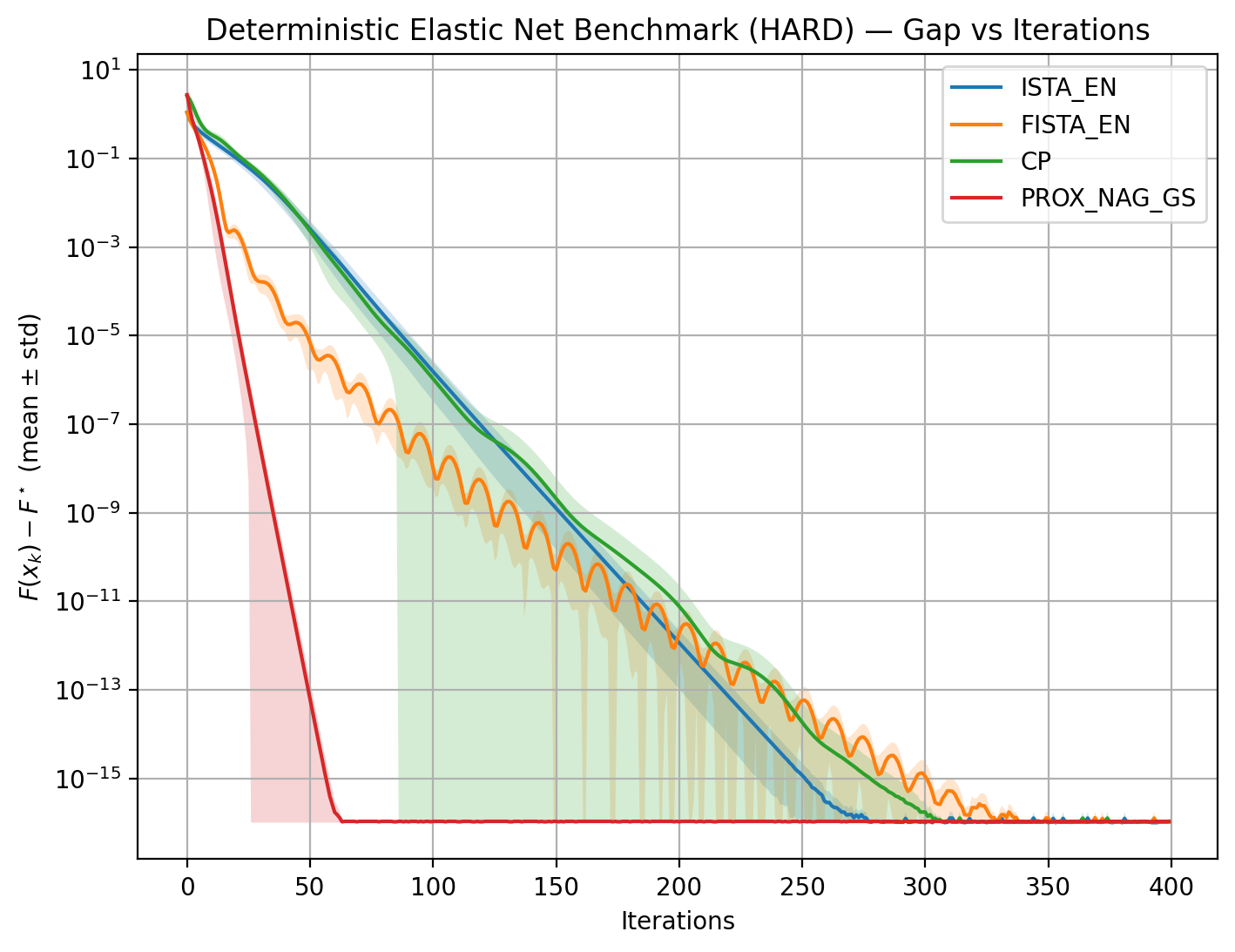}
\caption{Deterministic Elastic Net. Optimality gap \(F(x_k)-F^\star\) versus iterations. Left: easy instance. Right: hard instance.}
\label{fig:elastic-net-gap-iter}
\end{figure}

\begin{figure}[!htbp]
\centering
\includegraphics[width=0.49\textwidth]{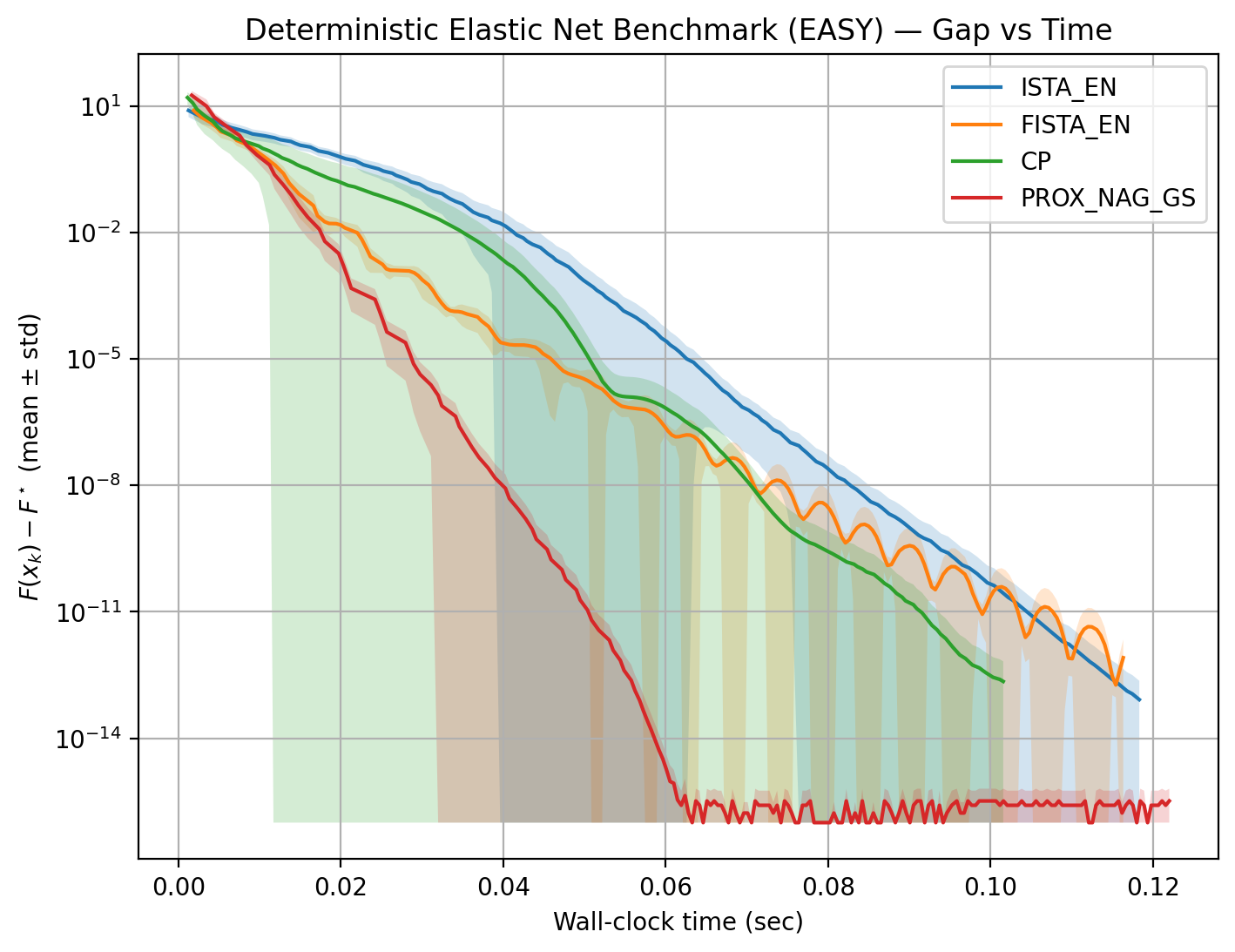}
\includegraphics[width=0.49\textwidth]{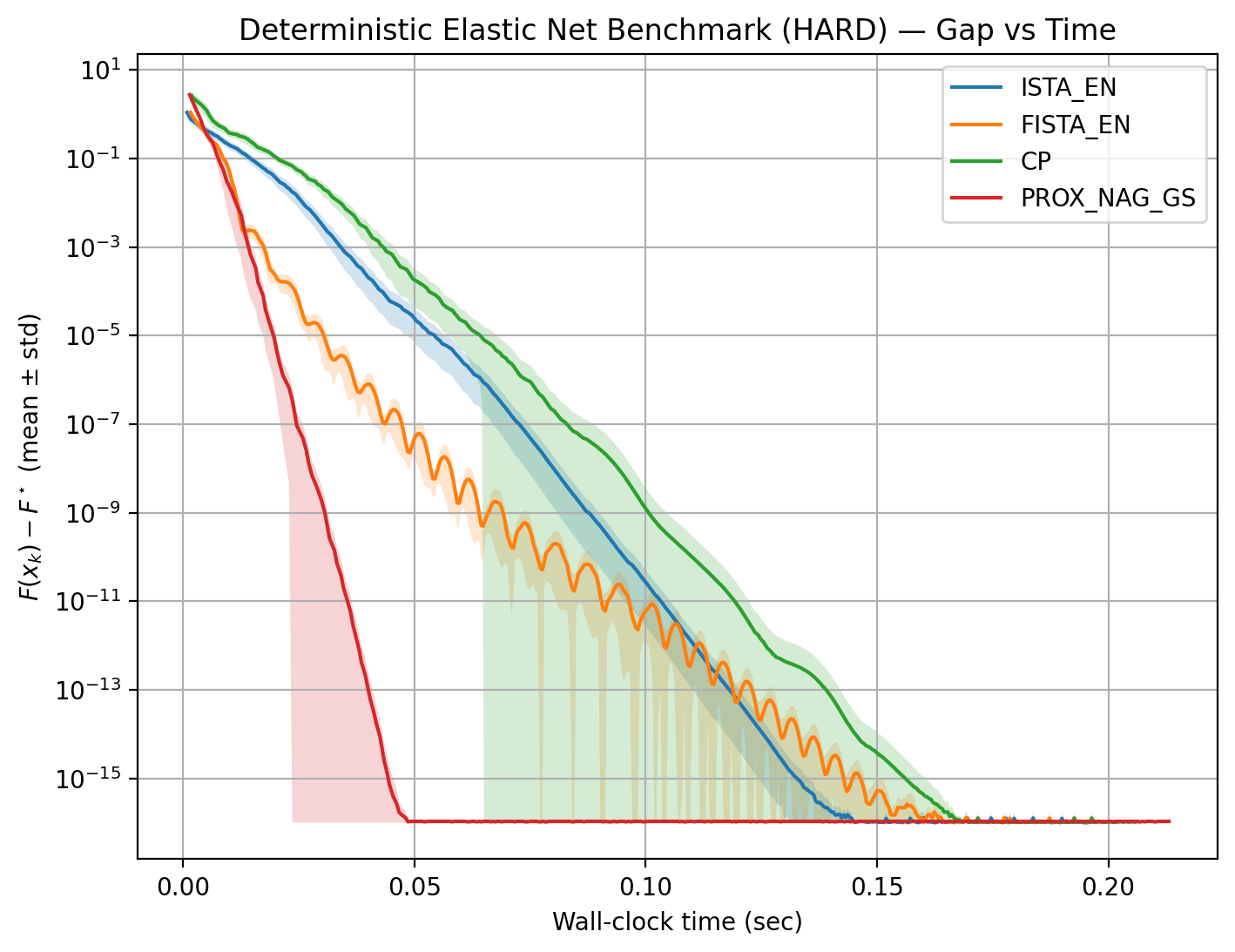}
\caption{Deterministic Elastic Net. Optimality gap \(F(x_k)-F^\star\) versus wall-clock time. Left: easy instance. Right: hard instance.}
\label{fig:elastic-net-gap-time}
\end{figure}

\FloatBarrier

All methods reach the same final objective up to the reported precision. Prox-NAG-GS reaches
the \(10^{-6}\) gap threshold with substantially fewer iterations. On the easy instance, the
iteration reduction ranges from \(2.41\times\) to \(3.41\times\), depending on the baseline. On the
hard instance, where the synthetic matrix has condition number close to \(10^3\), it ranges from
\(2.52\times\) to \(4.10\times\).

The wall-clock comparison is less favorable in this benchmark. Although Prox-NAG-GS needs
fewer iterations, one iteration is slightly more expensive in the present implementation. Hence,
the Elastic Net experiment mainly shows an iteration-efficiency advantage, rather than a clear
runtime advantage.

\FloatBarrier

\subsection{Deterministic Group Lasso}

We next consider the Group Lasso problem
\begin{equation}
\label{eq:group-lasso-det-num}
    \min_{x\in\R^d}
    \frac12\norm{Ax-b}_2^2
    +\frac{\lambda_2}{2}\norm{x}_2^2
    +\lambda_g\sum_{G\in\mathcal G}\norm{x_G}_2 .
\end{equation}
The groups are contiguous blocks of size \(10\). The ground truth contains \(8\) active groups.
This benchmark tests whether the proximal NAG-GS update remains effective for a structured
nonsmooth penalty.

% \begin{table}[t]
% \centering
% \caption{Deterministic Group Lasso. Final objective, active groups, iterations to reach the \(10^{-6}\) optimality gap, and wall-clock time. Values are averages over five seeds.}
% \label{tab:group-lasso-det-summary}
% \begin{tabular}{llcccc}
% \toprule
% Instance & Method & Final obj. & Active groups & Iterations to \(10^{-6}\) & Time (s) \\
% \midrule
% Easy & Group-ISTA & 5.6159 & 8.0 & 125.8 & 0.2302 \\
% Easy & Group-FISTA & 5.6159 & 8.0 & 93.2 & 0.2250 \\
% Easy & Group-CP & 5.6159 & 8.0 & 69.0 & 0.2304 \\
% Easy & Group Prox-NAG-GS & \textbf{5.6159} & 8.0 & \textbf{34.0} & \textbf{0.1846} \\
% \midrule
% Hard & Group-ISTA & 0.9366 & 9.0 & 161.6 & 0.4215 \\
% Hard & Group-FISTA & 0.9366 & 9.0 & 88.6 & 0.4225 \\
% Hard & Group-CP & 0.9366 & 9.0 & 140.4 & 0.4220 \\
% Hard & Group Prox-NAG-GS & \textbf{0.9366} & 9.0 & \textbf{30.8} & \textbf{0.3768} \\
% \bottomrule
% \end{tabular}
% \end{table}
\begin{table}[H]
\centering
\caption{Deterministic Group Lasso. Final objective, active groups, iterations to reach the \(10^{-6}\) optimality gap, and wall-clock time. Values are averages over five seeds.}
\label{tab:group-lasso-det-summary}
\small
\setlength{\tabcolsep}{4pt}
\begin{tabular}{llcccc}
\toprule
Instance & Method & Final obj. & Active groups & Iter. to \(10^{-6}\) & Time (s) \\
\midrule
Easy & Group-ISTA & 5.6159 & 8.0 & 125.8 & 0.2302 \\
Easy & Group-FISTA & 5.6159 & 8.0 & 93.2 & 0.2250 \\
Easy & Group-CP & 5.6159 & 8.0 & 69.0 & 0.2304 \\
Easy & Group Prox-NAG-GS & \textbf{5.6159} & 8.0 & \textbf{34.0} & \textbf{0.1846} \\
\midrule
Hard & Group-ISTA & 0.9366 & 9.0 & 161.6 & 0.4215 \\
Hard & Group-FISTA & 0.9366 & 9.0 & 88.6 & 0.4225 \\
Hard & Group-CP & 0.9366 & 9.0 & 140.4 & 0.4220 \\
Hard & Group Prox-NAG-GS & \textbf{0.9366} & 9.0 & \textbf{30.8} & \textbf{0.3768} \\
\bottomrule
\end{tabular}
\end{table}

\FloatBarrier

\begin{figure}[!htbp]
\centering
\includegraphics[width=0.49\textwidth]{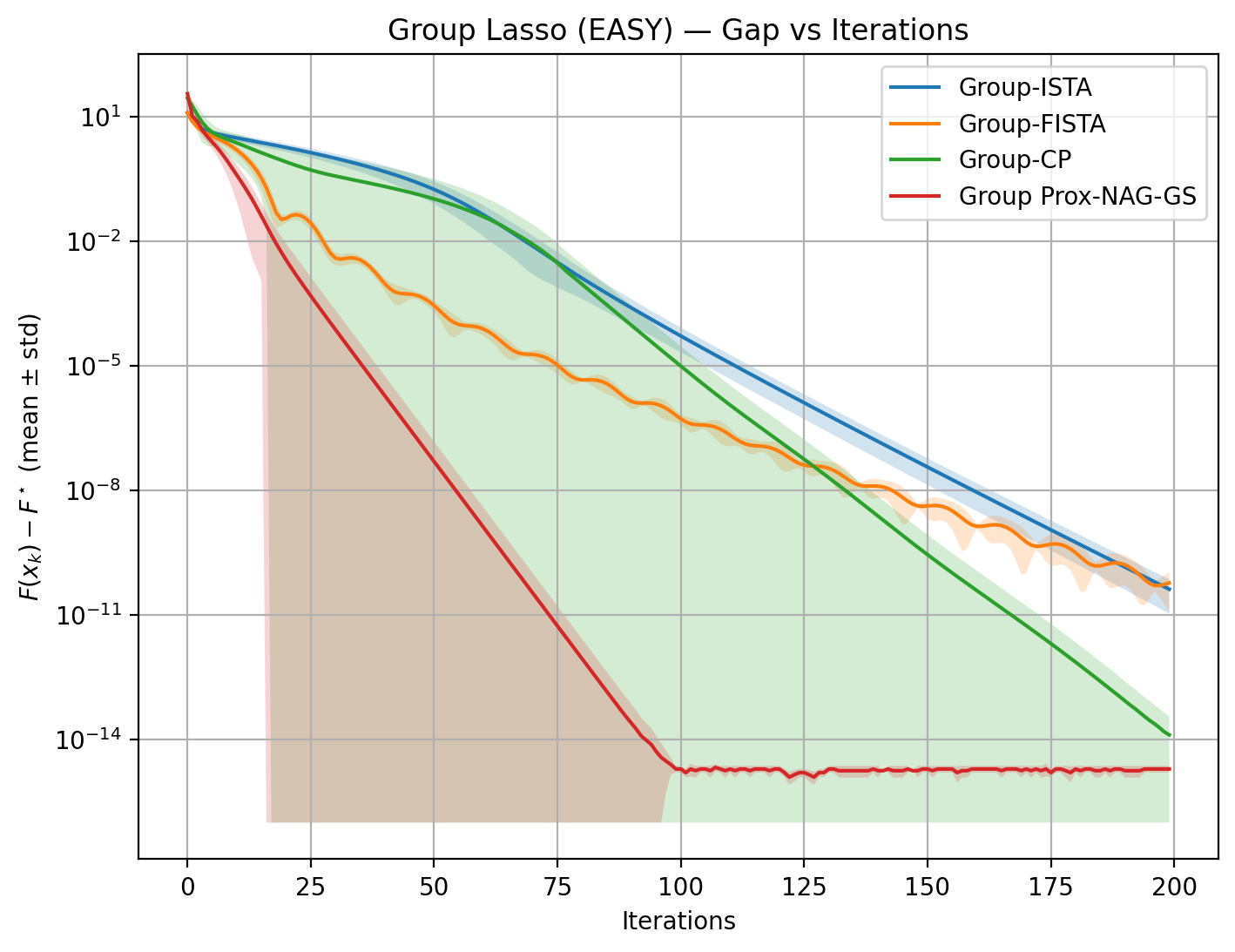}
\includegraphics[width=0.49\textwidth]{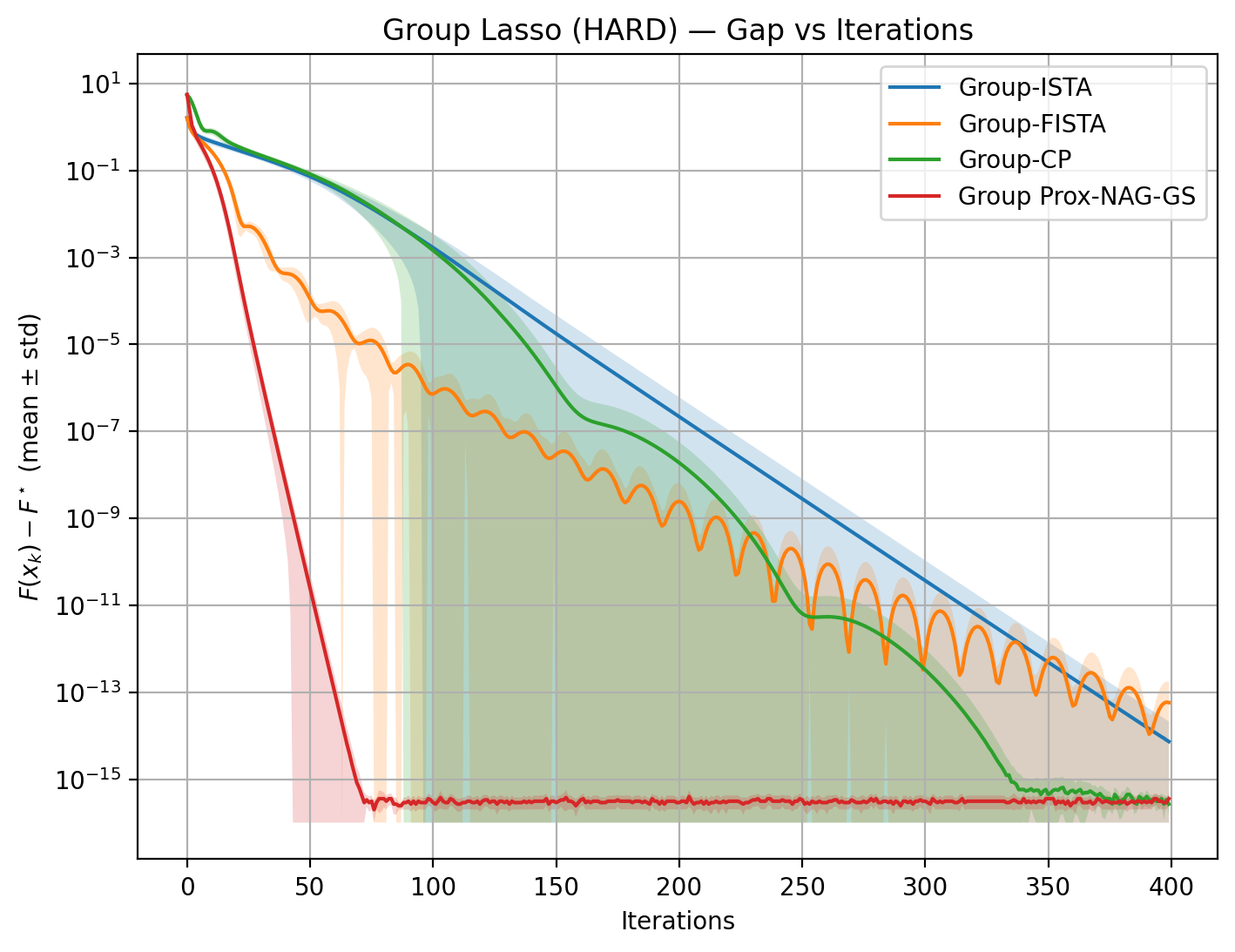}
\caption{Deterministic Group Lasso. Optimality gap \(F(x_k)-F^\star\) versus iterations. Left: easy instance. Right: hard instance.}
\label{fig:group-lasso-gap-iter}
\end{figure}

\begin{figure}[!htbp]
\centering
\includegraphics[width=0.49\textwidth]{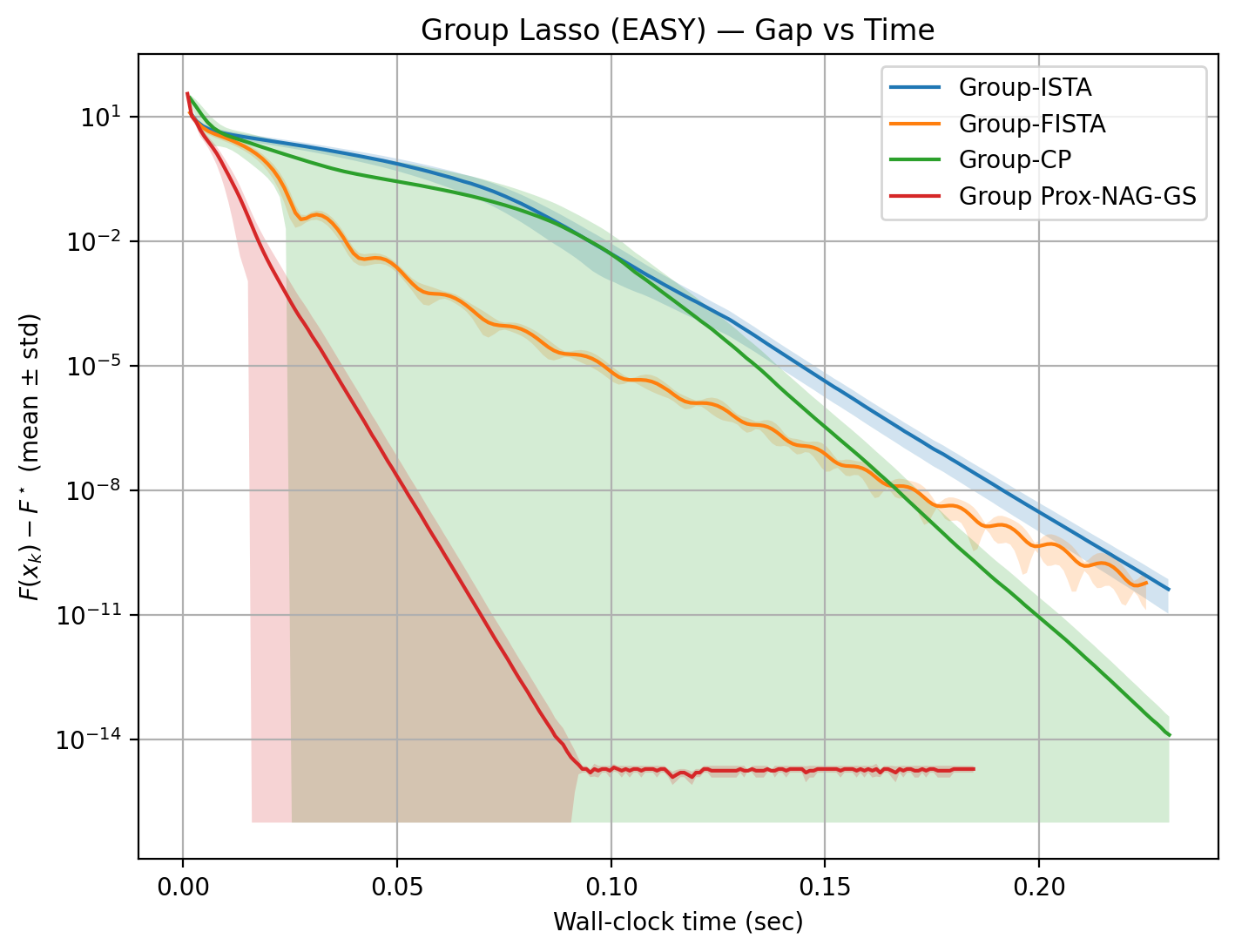}
\includegraphics[width=0.49\textwidth]{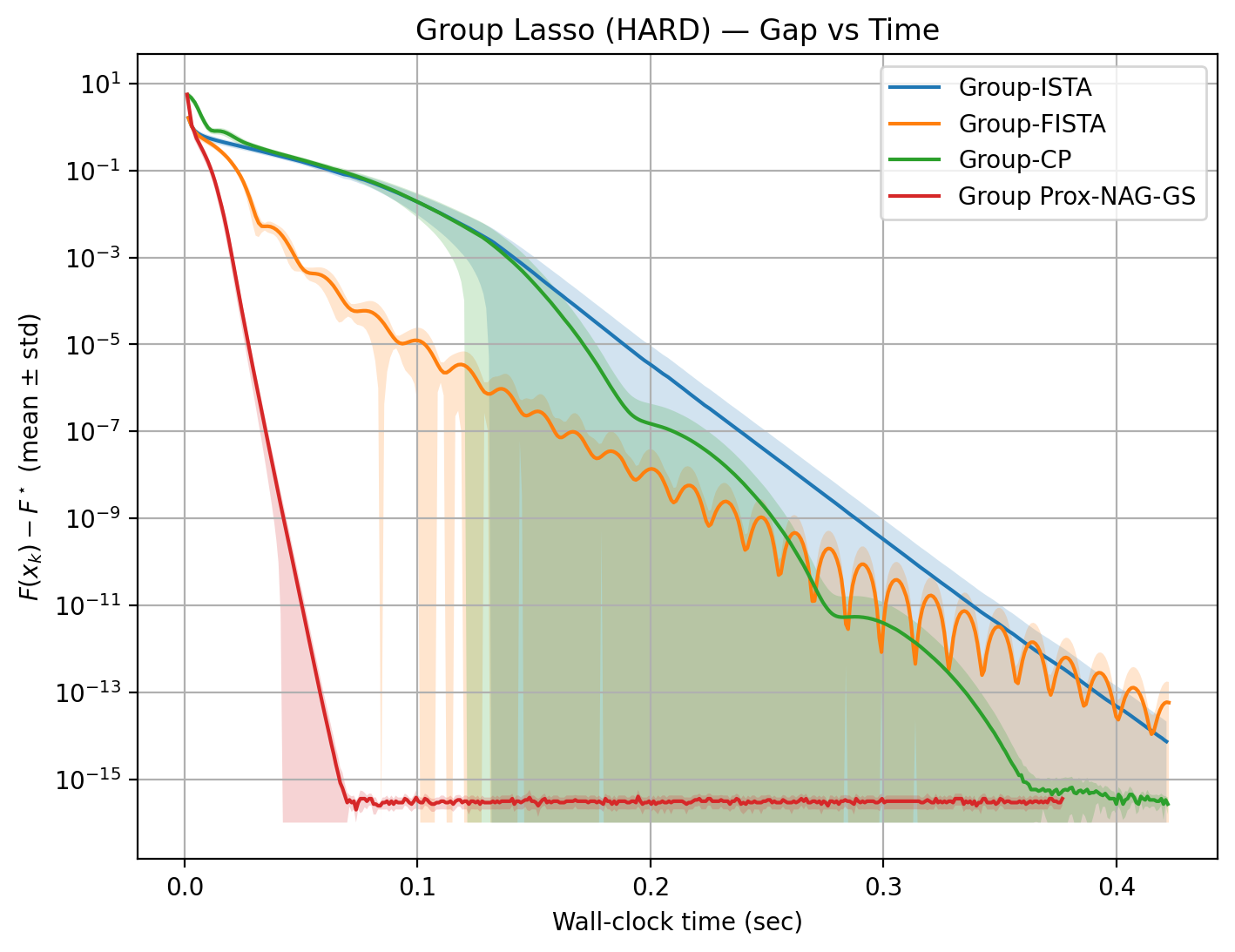}
\caption{Deterministic Group Lasso. Optimality gap \(F(x_k)-F^\star\) versus wall-clock time. Left: easy instance. Right: hard instance.}
\label{fig:group-lasso-gap-time}
\end{figure}

\FloatBarrier

% This benchmark gives the clearest deterministic evidence in favor of Prox-NAG-GS. All methods
% reach the same objective value and recover the same number of active groups. However,
% Prox-NAG-GS reaches the \(10^{-6}\) gap threshold in significantly fewer iterations. In contrast
% with the Elastic Net test, this also translates into the best wall-clock time on both the easy and
% the ill-conditioned instances.
This benchmark gives the clearest deterministic evidence in favor of Prox-NAG-GS. All methods
reach the same objective value and recover the same number of active groups. However,
Prox-NAG-GS reaches the \(10^{-6}\) gap threshold in significantly fewer iterations. In contrast
with the Elastic Net test, this also translates into the best wall-clock time on both the easy and
the hard instances.

\FloatBarrier

\subsection{Numerical check of the theoretical regime}
\label{subsec:theory-regime-check}

The previous deterministic tests use tuned parameters and are meant to compare Prox-NAG-GS with standard proximal baselines. We now perform a different experiment. Its goal is to check the parameter regime used in the convergence proof. More precisely, we run Prox-NAG-GS on the deterministic Elastic Net problem with
\[
    \widehat\mu=L,
\]
where \(L\) is the smoothness constant of the differentiable part. This is the boundary case of the sufficient condition \(\widehat\mu\ge L\) used in Theorem~\ref{thm:linear-conservative-new}.

This experiment is not intended as a performance comparison. The tuned values of \(\widehat\mu\) used in the main benchmarks can be more aggressive than the conservative value covered by the theorem. The purpose here is instead to verify that the theoretical regime gives the behavior predicted by the analysis.

Figure~\ref{fig:theory-regime-check} reports the results. The left panel shows the objective gaps at both variables. The gap at \(v_k\) is the one directly connected to the Lyapunov analysis, while \(x_k\) is the practical output variable used in the deterministic experiments. Both gaps decrease steadily, and the two curves become almost indistinguishable after a short initial phase. This is consistent with the fact that \(x_k\) and \(v_k\) coincide at a fixed point of the deterministic iteration.

The right panel reports the augmented Lyapunov quantity
\[
    \mathcal L_k
    =
    F(v_k)-F^\star
    +b\norm{v_k-x^\star}^2
    +c\norm{x_k-x^\star}^2 .
\]
It decreases along the iterations and remains below the theoretical envelope \(\mathcal L_0\theta^k\). For each seed, we also checked the contraction inequality
\[
    \mathcal L_{k+1}\le \theta \mathcal L_k,
\]
where \(\theta\) is the value given by Theorem~\ref{thm:linear-conservative-new}. No violation was observed in the five runs. Finally, the mismatch-absorption term from Lemma~\ref{lem:mismatch-absorption-new} remained nonpositive up to numerical precision for all iterations and all seeds. This gives a direct numerical check that the conservative condition \(\widehat\mu\ge L\) leads to the behavior predicted by the proof.

\begin{figure}[!htbp]
\centering
\includegraphics[width=0.49\textwidth]{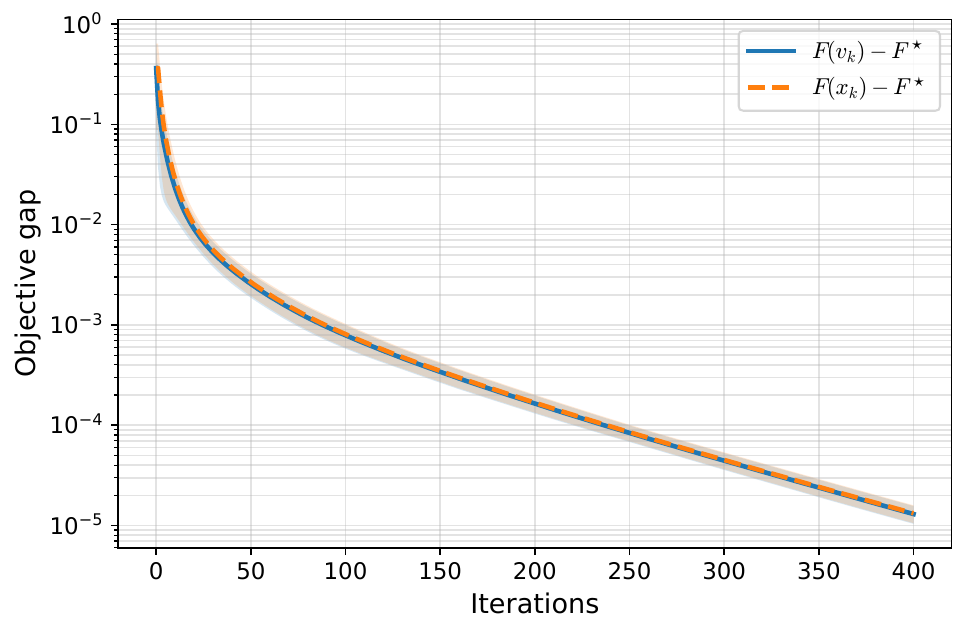}
\includegraphics[width=0.49\textwidth]{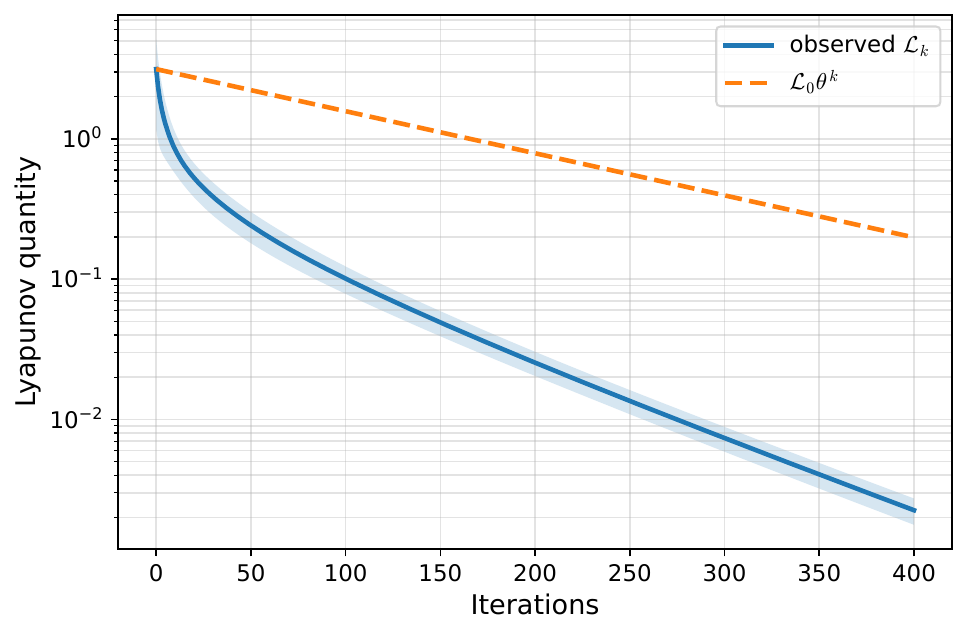}
\caption{Numerical check of the theoretical regime on the deterministic Elastic Net benchmark with \(\widehat\mu=L\). Left: objective gaps at \(x_k\) and \(v_k\). Right: augmented Lyapunov quantity \(\mathcal L_k\) compared with the theoretical envelope \(\mathcal L_0\theta^k\). Curves are averaged over five seeds, and shaded regions show one standard deviation.}
\label{fig:theory-regime-check}
\end{figure}

\FloatBarrier

\subsection{Stochastic \(\ell_1\) softmax regression}

The stochastic \(\ell_1\) benchmark uses softmax regression on MNIST:
\begin{equation}
\label{eq:stoch-l1-num}
    \min_{W\in\R^{d\times C}}
    \frac1n\sum_{i=1}^n \ell_i(W)
    +\lambda_1\norm{W}_1
    +\frac{\lambda_2}{2}\norm{W}_F^2 .
\end{equation}
We compare Prox-NAG-GS with Prox-SGD. Both methods use the same mini-batch size and the
same train-validation-test split for each seed.

\begin{table}[H]
\centering
\caption{Stochastic \(\ell_1\) softmax regression. Mean \(\pm\) standard deviation over five seeds.}
\label{tab:stoch-l1-summary}
\resizebox{\textwidth}{!}{
\begin{tabular}{lcccccc}
\toprule
Method & Final obj. & Data-fit & Reg. term & Test acc. & Sparsity & Time (s) \\
\midrule
Prox-SGD
& \(0.3362\pm0.0008\)
& \(0.2802\pm0.0007\)
& \(\mathbf{0.0561\pm0.0002}\)
& \(0.9206\pm0.0008\)
& \(\mathbf{0.4127\pm0.0035}\)
& \(\mathbf{37.53\pm0.68}\) \\
Prox-NAG-GS
& \(\mathbf{0.3351\pm0.0049}\)
& \(\mathbf{0.2723\pm0.0025}\)
& \(0.0628\pm0.0027\)
& \(\mathbf{0.9212\pm0.0017}\)
& \(0.3402\pm0.0247\)
& \(37.92\pm0.32\) \\
\bottomrule
\end{tabular}
}
\end{table}

\FloatBarrier

% \begin{table}[t]
% \centering
% \caption{\(\lambda_1\) sweep for stochastic \(\ell_1\) softmax regression. Values are averages over five seeds.}
% \label{tab:lambda-sweep}
% \begin{tabular}{llccc}
% \toprule
% \(\lambda_1\) & Method & Final test acc. & Test acc. at best val. & Final sparsity \\
% \midrule
% \(10^{-4}\) & Prox-SGD & 0.92056 & 0.92090 & 0.41268 \\
% \(10^{-4}\) & Prox-NAG-GS & 0.92124 & 0.92144 & 0.34015 \\
% \midrule
% \(5\times10^{-4}\) & Prox-SGD & 0.90966 & 0.91020 & 0.54074 \\
% \(5\times10^{-4}\) & Prox-NAG-GS & 0.90854 & 0.91110 & 0.38997 \\
% \midrule
% \(10^{-3}\) & Prox-SGD & 0.89938 & 0.90070 & 0.62401 \\
% \(10^{-3}\) & Prox-NAG-GS & 0.89994 & 0.90558 & 0.43020 \\
% \midrule
% \(5\times10^{-3}\) & Prox-SGD & 0.85334 & 0.85752 & 0.83151 \\
% \(5\times10^{-3}\) & Prox-NAG-GS & 0.58604 & 0.88084 & 0.45298 \\
% \bottomrule
% \end{tabular}
% \end{table}
\begin{table}[H]
\centering
\caption{\(\lambda_1\) sweep for stochastic \(\ell_1\) softmax regression. Values are averages over five seeds.}
\label{tab:lambda-sweep}
\small
\setlength{\tabcolsep}{4pt}
\begin{tabular}{@{}llccc@{}}
\toprule
\(\lambda_1\) & Method & Final test acc. & Best-val test acc. & Sparsity \\
\midrule
\(10^{-4}\) & Prox-SGD & 0.92056 & 0.92090 & 0.41268 \\
\(10^{-4}\) & Prox-NAG-GS & 0.92124 & 0.92144 & 0.34015 \\
\midrule
\(5\times10^{-4}\) & Prox-SGD & 0.90966 & 0.91020 & 0.54074 \\
\(5\times10^{-4}\) & Prox-NAG-GS & 0.90854 & 0.91110 & 0.38997 \\
\midrule
\(10^{-3}\) & Prox-SGD & 0.89938 & 0.90070 & 0.62401 \\
\(10^{-3}\) & Prox-NAG-GS & 0.89994 & 0.90558 & 0.43020 \\
\midrule
\(5\times10^{-3}\) & Prox-SGD & 0.85334 & 0.85752 & 0.83151 \\
\(5\times10^{-3}\) & Prox-NAG-GS & 0.58604 & 0.88084 & 0.45298 \\
\bottomrule
\end{tabular}
\end{table}

\begin{figure}[!htbp]
\centering
\includegraphics[width=0.62\textwidth]{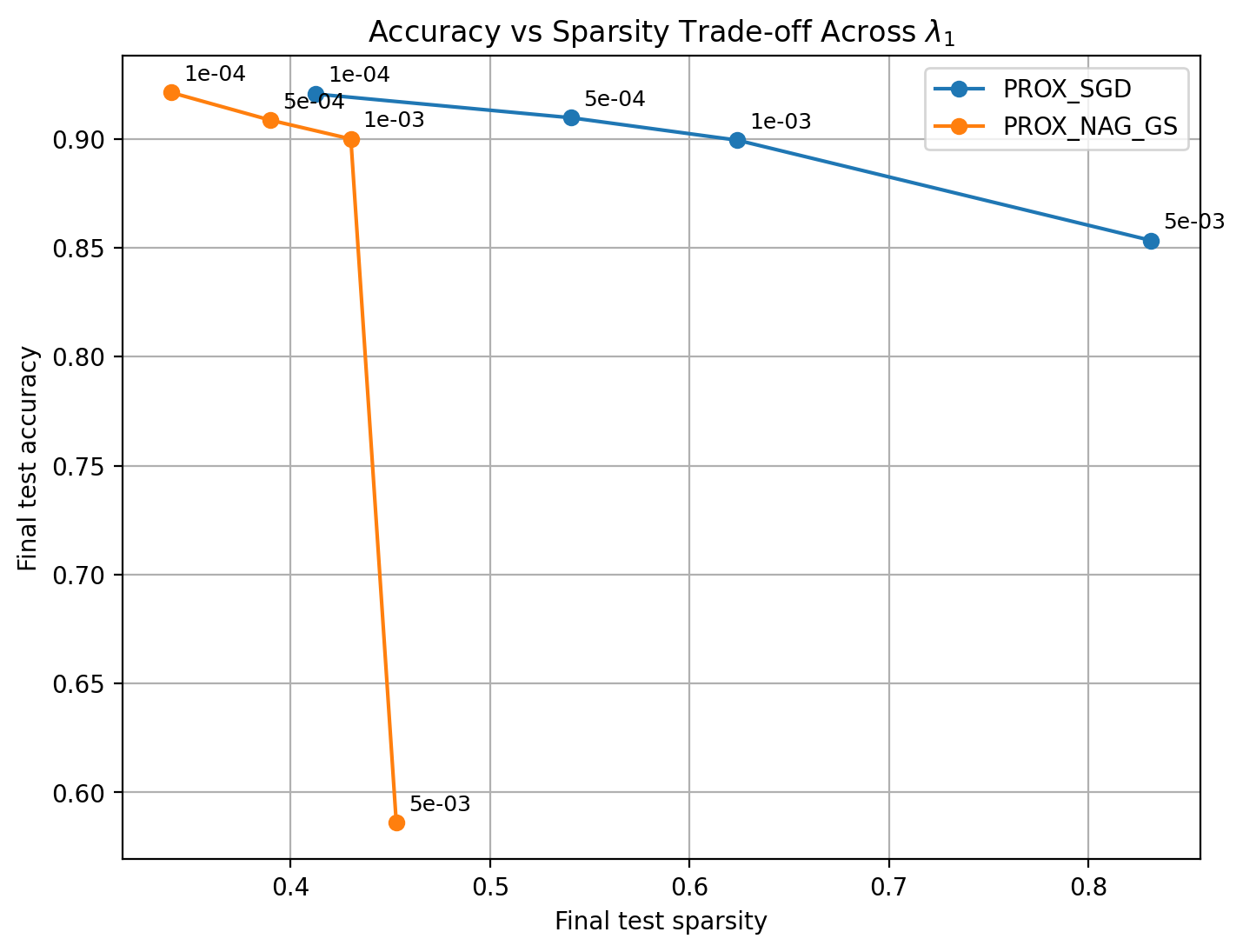}
\caption{Stochastic \(\ell_1\) softmax regression. Accuracy-sparsity trade-off across different values of \(\lambda_1\).}
\label{fig:accuracy-sparsity-tradeoff}
\end{figure}

\FloatBarrier

\begin{figure}[!htbp]
\centering
\includegraphics[width=0.48\textwidth]{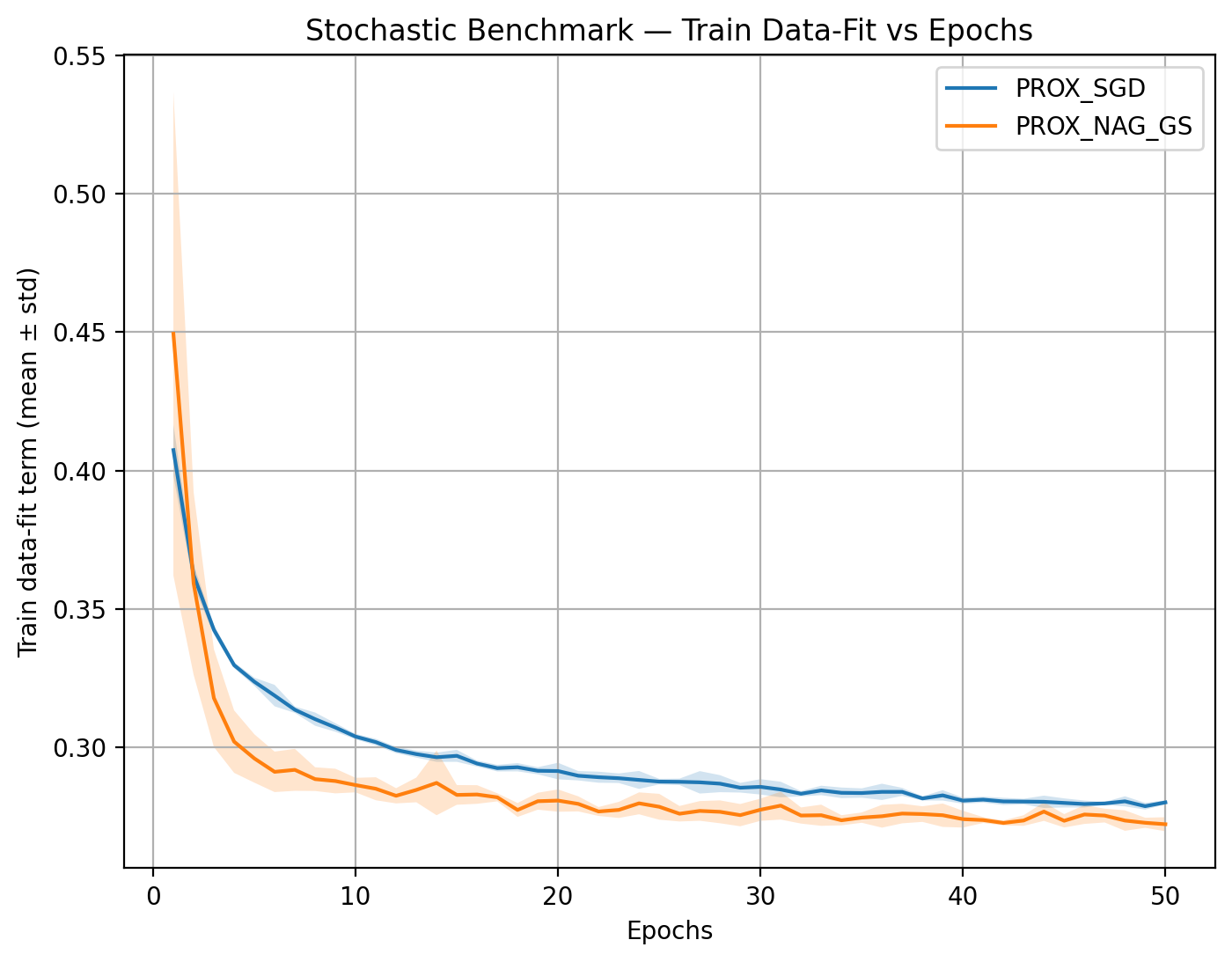}
\includegraphics[width=0.48\textwidth]{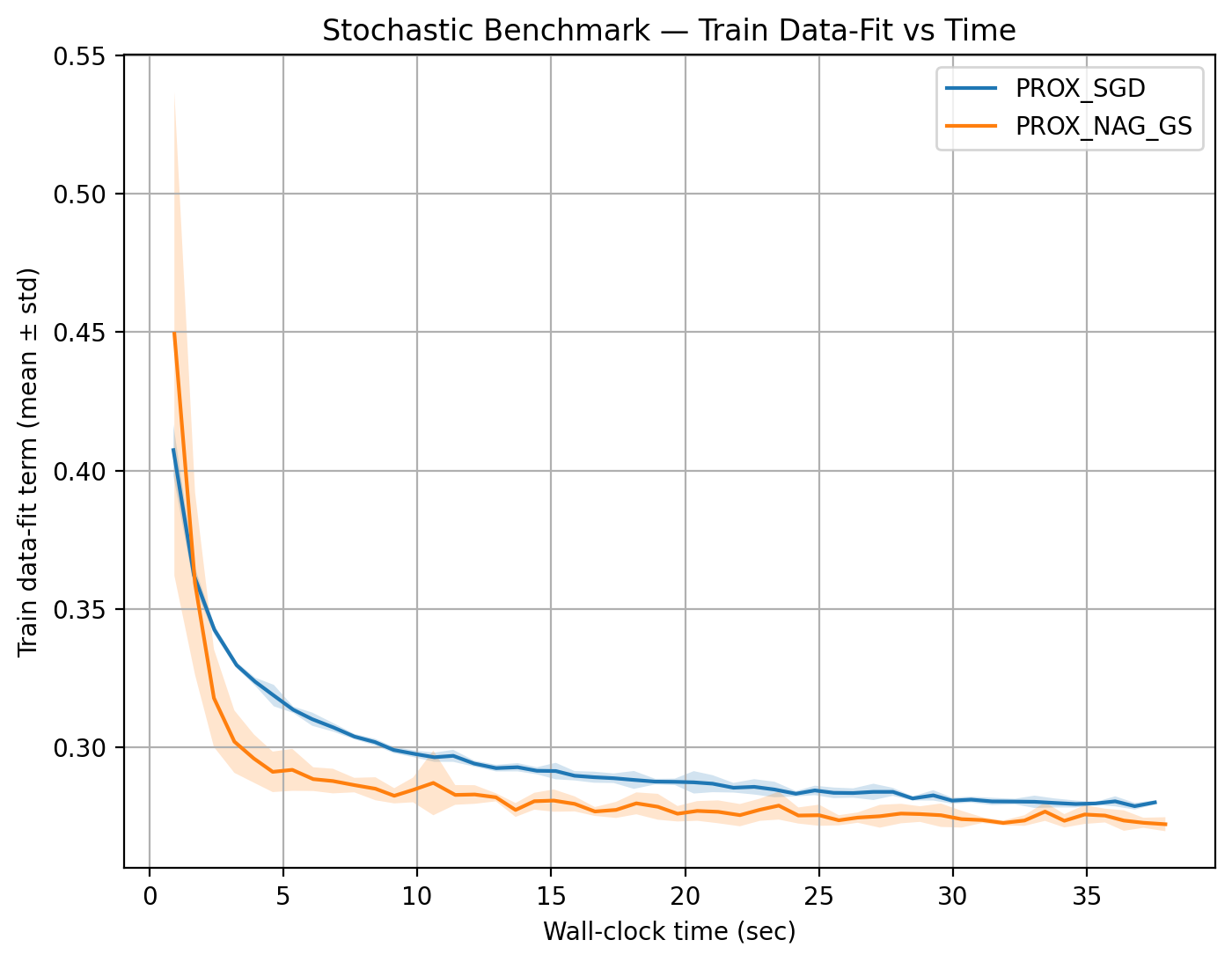}
\caption{Stochastic \(\ell_1\) softmax regression. Training data-fit term versus epochs and wall-clock time.}
\label{fig:stoch-l1-datafit}
\end{figure}

\FloatBarrier

\begin{figure}[!htbp]
\centering
\includegraphics[width=0.48\textwidth]{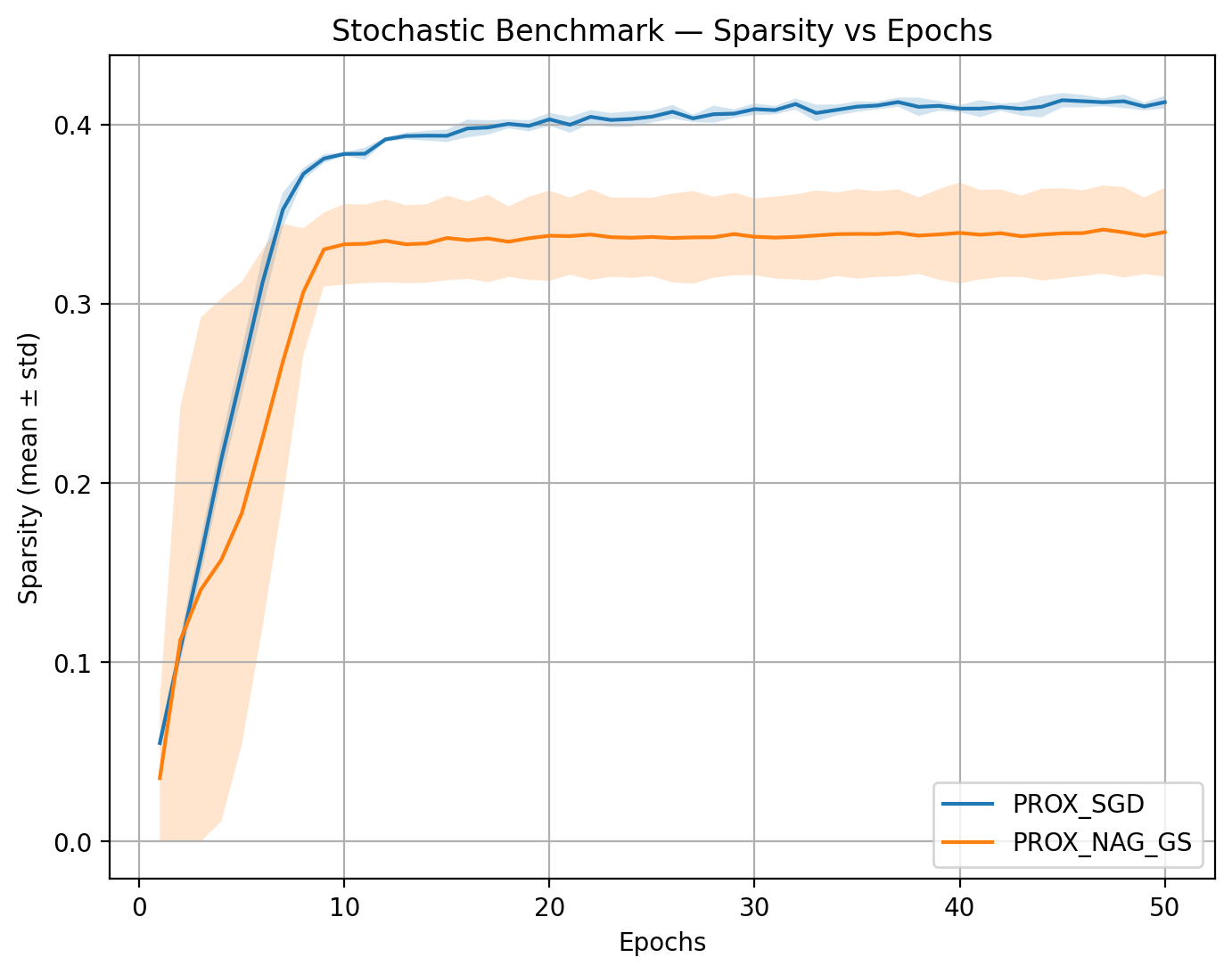}
\includegraphics[width=0.48\textwidth]{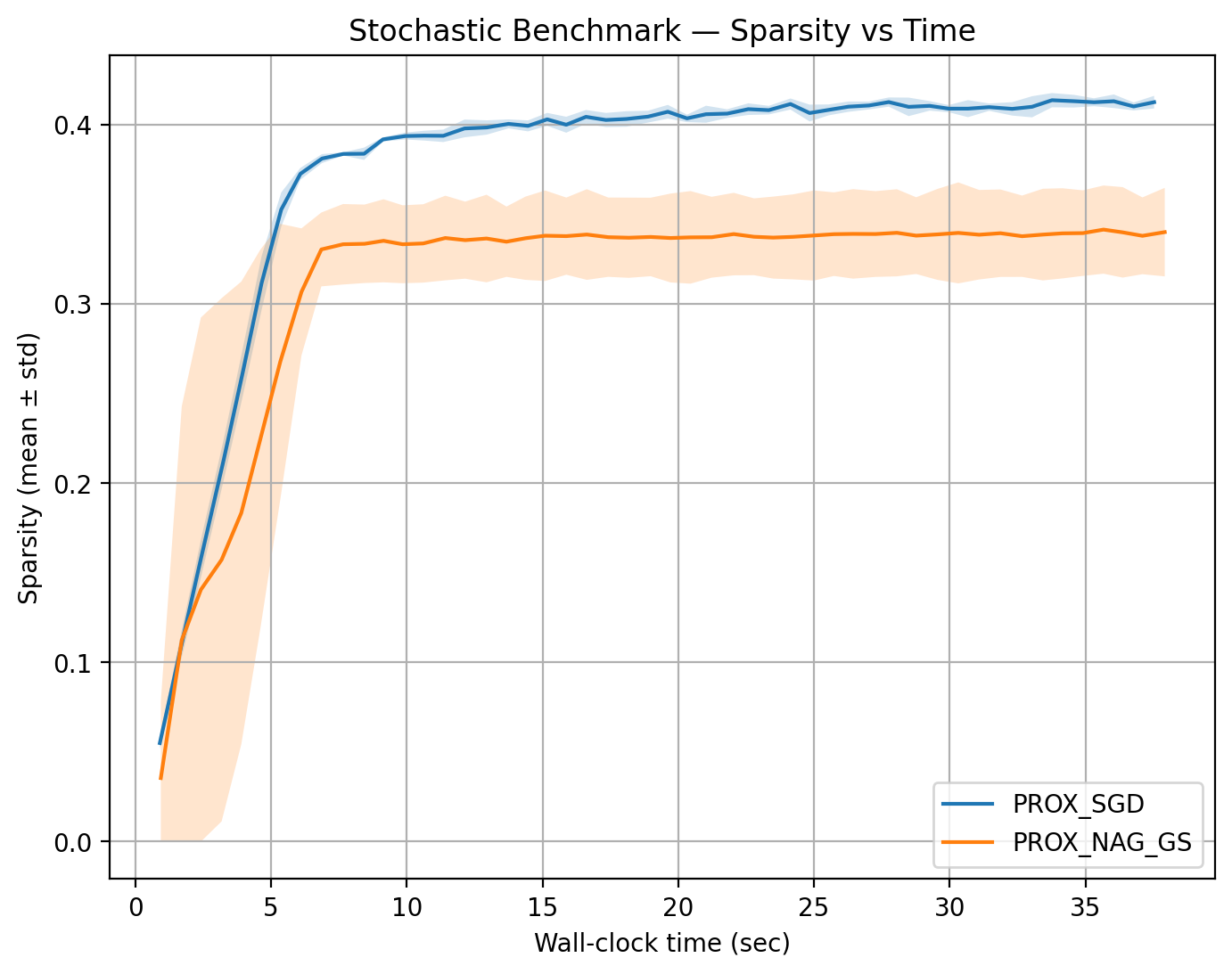}
\caption{Stochastic \(\ell_1\) softmax regression. Sparsity versus epochs and wall-clock time.}
\label{fig:stoch-l1-sparsity}
\end{figure}

\FloatBarrier

The stochastic \(\ell_1\) experiment complements the deterministic tests.
Prox-NAG-GS obtains a slightly lower full objective and a lower data-fit term for the base value
of \(\lambda_1\), with essentially the same test accuracy. However, Prox-SGD gives a smaller
regularization term and produces a sparser model.

The sweep over \(\lambda_1\) confirms this trade-off. For small and moderate regularization,
Prox-NAG-GS remains competitive in accuracy, but it produces denser solutions. For the largest
value of \(\lambda_1\), the final iterate of Prox-NAG-GS becomes less stable: the final test accuracy
drops, even though the best validation checkpoint remains competitive. This suggests that, in the
strongly regularized stochastic regime, the present version of Prox-NAG-GS may need additional
stabilization, for instance damping, averaging, or restart strategies.

\FloatBarrier

\subsection{Stochastic Group Lasso}

The stochastic Group Lasso benchmark also uses softmax regression on MNIST. We define one
group per input pixel, grouping the weights associated with this pixel across all output classes:
\begin{equation}
\label{eq:stoch-group-num}
    \min_W
    \frac1n\sum_{i=1}^n\ell_i(W)
    +\lambda_g\sum_{G\in\mathcal G}\norm{W_G}_2
    +\frac{\lambda_2}{2}\norm{W}_F^2 .
\end{equation}
The model has \(784\) groups, each of size \(10\).

\begin{table}[H]
\centering
\caption{Stochastic Group Lasso on MNIST softmax regression. Mean \(\pm\) standard deviation over five seeds.}
\label{tab:stoch-group-summary}
\resizebox{\textwidth}{!}{
\begin{tabular}{lcccccc}
\toprule
Method & Final obj. & Data-fit & Reg. term & Test acc. & Group sparsity & Time (s) \\
\midrule
Group Prox-SGD
& \(\mathbf{0.2983\pm0.0010}\)
& \(0.2668\pm0.0011\)
& \(\mathbf{0.0315\pm0.0003}\)
& \(\mathbf{0.9229\pm0.0007}\)
& \(\mathbf{0.2566\pm0.0098}\)
& \(615.83\pm11.40\) \\
Group Prox-NAG-GS
& \(0.3061\pm0.0140\)
& \(\mathbf{0.2637\pm0.0109}\)
& \(0.0424\pm0.0040\)
& \(0.9200\pm0.0066\)
& \(0.1852\pm0.0272\)
& \(\mathbf{612.42\pm13.34}\) \\
\bottomrule
\end{tabular}
}
\end{table}

\FloatBarrier

\begin{figure}[!htbp]
\centering
\includegraphics[width=0.48\textwidth]{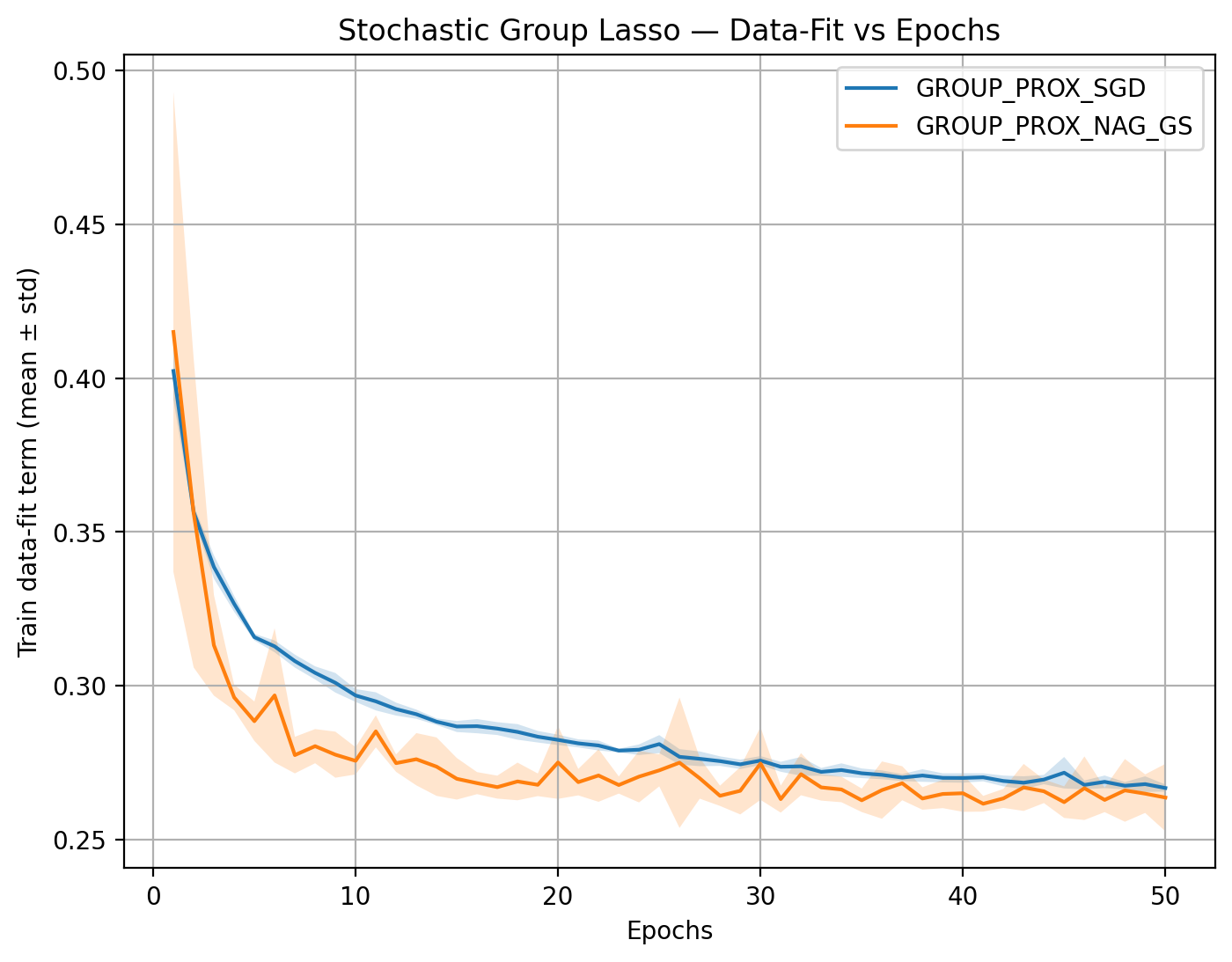}
\includegraphics[width=0.48\textwidth]{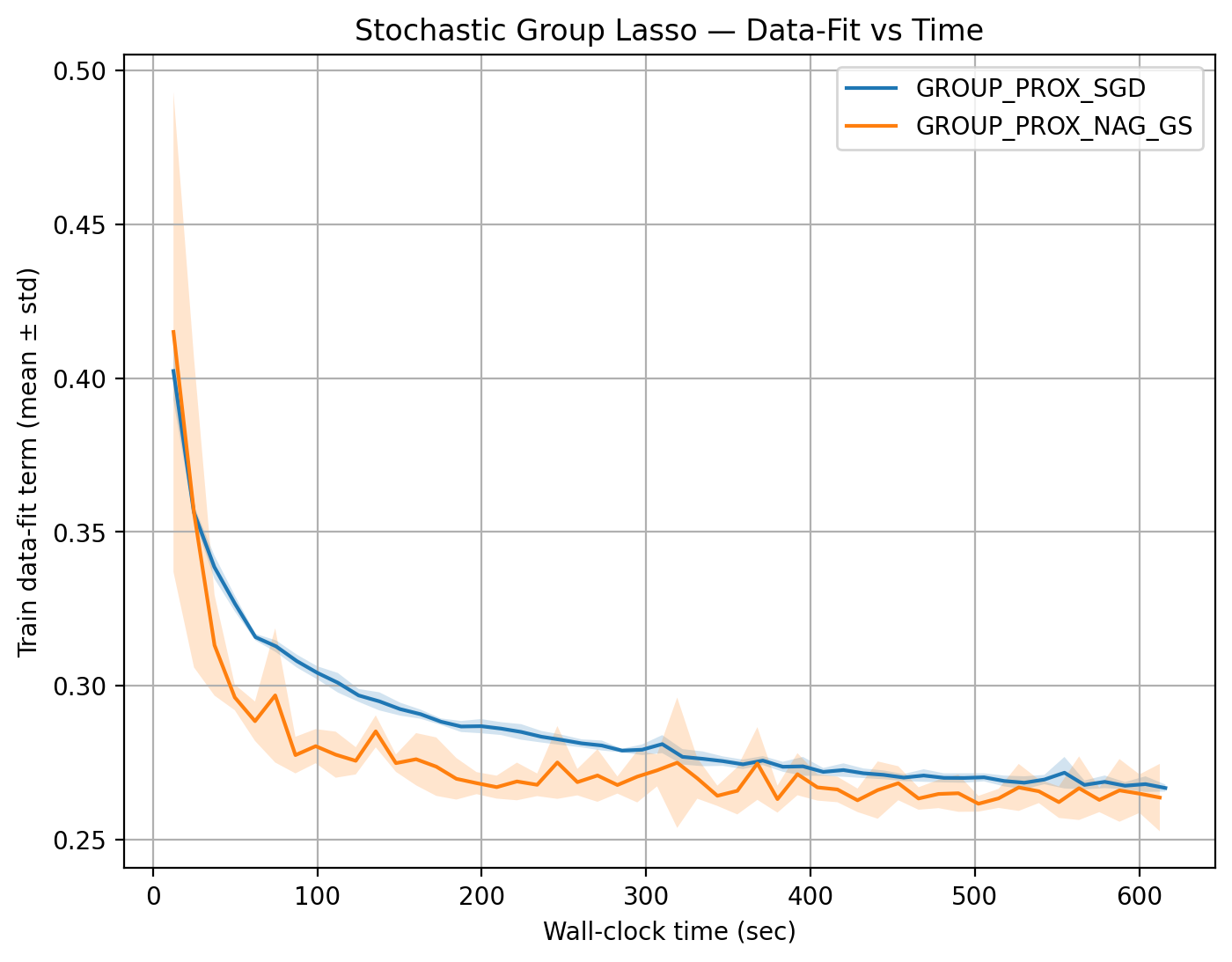}
\caption{Stochastic Group Lasso. Training data-fit term versus epochs and wall-clock time.}
\label{fig:stoch-group-datafit}
\end{figure}

\FloatBarrier

\begin{figure}[!htbp]
\centering
\includegraphics[width=0.48\textwidth]{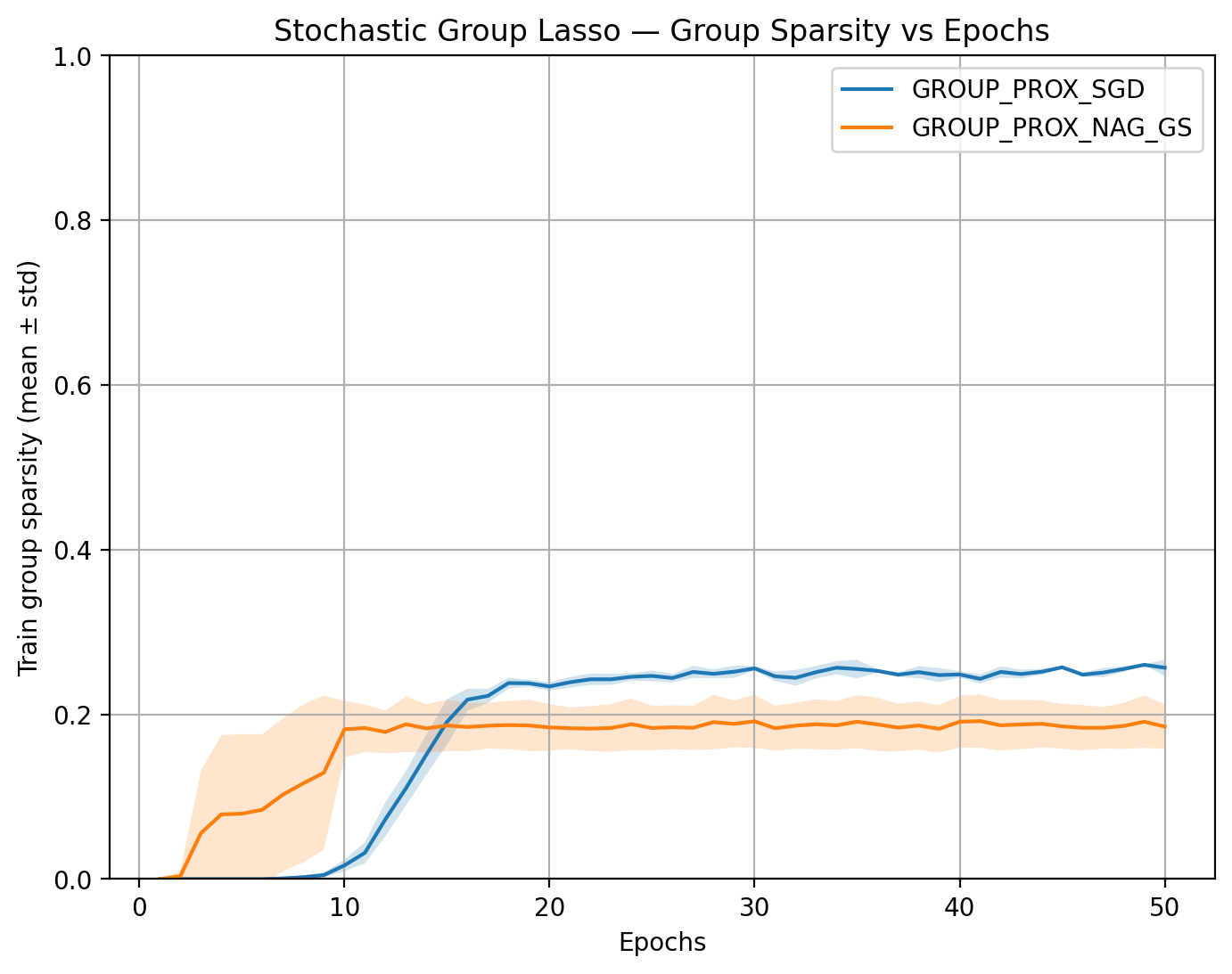}
\includegraphics[width=0.48\textwidth]{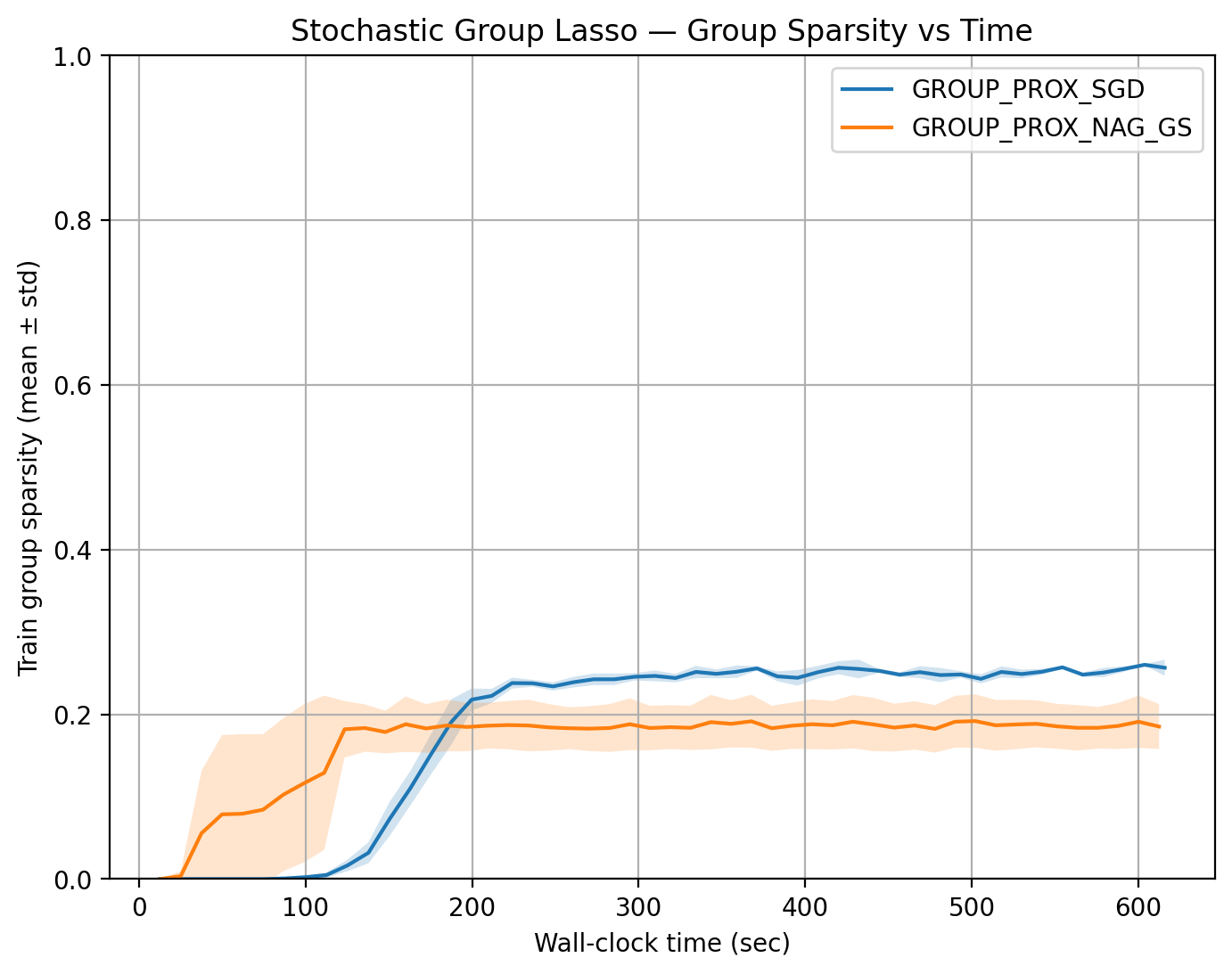}
\caption{Stochastic Group Lasso. Group sparsity versus epochs and wall-clock time.}
\label{fig:stoch-group-sparsity}
\end{figure}

\FloatBarrier

The stochastic Group Lasso experiment is consistent with the stochastic \(\ell_1\) test.
Group Prox-NAG-GS gives the lowest data-fit term, while Group Prox-SGD gives the lowest full
regularized objective. The reason is again the regularization term: Prox-SGD produces stronger
group sparsity, whereas Prox-NAG-GS keeps denser weights. The test accuracies are close, with a
small advantage for Prox-SGD at the final iterate. The wall-clock times are comparable.

\FloatBarrier

\subsection{Discussion of the experiments}

The deterministic experiments support the proximal extension clearly. On both Elastic Net and
Group Lasso, Prox-NAG-GS reaches the same objective value as the baselines and needs fewer
iterations. The strongest deterministic result is obtained on the Group Lasso benchmark, where
the iteration advantage also leads to the best wall-clock time.

The stochastic experiments also show competitive behavior. Prox-NAG-GS compares favorably
with Prox-SGD in terms of data-fit reduction and gives similar test accuracies. In both stochastic
benchmarks, the method tends to decrease the smooth empirical loss more strongly. At the same
time, the experiments reveal a different regularization behavior. Prox-SGD usually produces
sparser models and may give a lower full regularized objective when the nonsmooth regularization
is large.

Overall, the experiments suggest that the semi-implicit proximal coupling is useful in deterministic
composite problems, especially when the proximal operator is structured but still cheap to compute.
They also suggest that Prox-NAG-GS is promising in stochastic composite learning, but that its
regularization behavior may need more careful damping, averaging, or restart strategies.

\paragraph{Reproducibility and runtime.}
The full Python code, including the scripts used to regenerate the figures and tables reported in
this section, is available at
\[
\text{\url{https://github.com/giselesikeh/prox-nag-gs-composite-optimization.git}}.
\]
The main experimental parameters are specified explicitly in the corresponding scripts, including
the number of Optuna trials, the maximum number of iterations or epochs, the batch size, the
MNIST train/validation/test split, the dimensions of the deterministic problems, and the values of
\(\lambda_1,\lambda_2,\lambda_g\).
All experiments were run on Google Colab using an A100 GPU. The reported times are wall-clock
times measured in this environment. Absolute runtimes may differ on other hardware, but the
repository contains the code and settings needed to reproduce the reported comparisons.
\section{Conclusion}
\label{sec:conclusion}

We introduced Prox-NAG-GS, a proximal extension of NAG-GS for composite optimization
problems. The construction is direct: the second NAG-GS update can be written as the minimizer
of a quadratic model, and adding the nonsmooth term to this model gives a proximal update. The
method therefore keeps the semi-implicit coupling of NAG-GS while allowing nonsmooth
regularizers and simple convex constraints to be handled through their proximal operators.

We also provided deterministic convergence guarantees. The main point in the analysis is that the
gradient is evaluated at \(x_{k+1}\), while the proximal step returns \(v_{k+1}\). This creates a
mismatch that does not appear in the standard proximal-gradient proof. Under the sufficient
condition \(\widehat\mu\ge L\), this term can be controlled. In the strongly convex composite case,
we prove a linear convergence result using an augmented Lyapunov function involving both
\(\|v_k-x^\star\|^2\) and \(\|x_k-x^\star\|^2\). We also showed that the same Lyapunov structure
gives an \(O(1/k)\) rate for the best iterate and for the averaged iterate in the convex case. These
results do not claim optimal accelerated rates, but they provide a rigorous convergence analysis
for the proximal version of NAG-GS.

The deterministic experiments support the method. On Elastic Net and Group Lasso benchmarks,
Prox-NAG-GS reaches the same objective values as ISTA, FISTA and Chambolle-Pock, while
requiring fewer iterations. The Group Lasso experiment gives the strongest deterministic evidence:
Prox-NAG-GS is faster both in iterations and in wall-clock time. The stochastic experiments also
show competitive behavior. Prox-NAG-GS compares favorably with Prox-SGD in terms of data-fit
reduction and gives similar test accuracies. At the same time, the experiments reveal a different
regularization behavior: Prox-SGD usually produces sparser solutions and may give a lower full
regularized objective when the nonsmooth regularization is large. This suggests that Prox-NAG-GS
is promising in stochastic composite learning, but that its regularization behavior may need more
careful damping, averaging, or restart strategies.

Several questions remain open. The condition \(\widehat\mu\ge L\) used in the proof is conservative
and does not cover all parameter choices that work well in practice. A sharper analysis could
explain larger effective stepsizes and possibly lead to accelerated rates. Another natural direction
is the stochastic analysis of Prox-NAG-GS, especially under mini-batch gradients and variance
assumptions. Finally, broader numerical tests on larger-scale learning problems would help clarify
when the semi-implicit proximal coupling is most beneficial.

\bibliographystyle{tfs}
\bibliography{references}

\end{document}